\theoremstyle{plain}
\newtheorem{thm}{Theorem}[section]
\newtheorem{lem}[thm]{Lemma}
\newtheorem{cor}[thm]{Corollary}
\newtheorem{prop}[thm]{Proposition}
\theoremstyle{definition}
\newtheorem{defi}[thm]{Definition}
\title{\bf{Linear groups over a locally linear division ring}}
\author{Bui Xuan Hai\footnote{Faculty of Mathematics and Computer Science, University of Science, VNU-HCM, 227 Nguyen Van Cu Str., Dist. 5, HCM-City, Vietnam,  e-mail: bxhai@hcmus.edu.vn}, Mai Hoang Bien\footnote{Department of Basic Sciences, University of Architecture, 196 Pasteur Str., Dist. 1, HCM-City, Vietnam, e-mail:  maihoangbien012@yahoo.com}, and Trinh Thanh Deo\footnote{Faculty of Mathematics and Computer Science, University of Science, VNU-HCM, 227 Nguyen Van Cu Str., Dist. 5, HCM-City, Vietnam,  e-mail: ttdeo@hcmus.edu.vn}}
\begin{document}
\baselineskip=18pt
\maketitle
\def\Q{\mathbb{Q}}
\def\F{\mathbb{F}}
\newcommand{\dpcm}{ \hfill \rule{3mm}{3mm}}
\newcommand{\hpt}[2]{\left\{\begin{array}{#1} #2\end{array}\right.}
\begin{abstract}
In this paper, in the first we give definitions of some classes of division rings which strictly contain the class of centrally finite division rings. One of our main purpose is to construct  non-trivial examples of rings of new defined classes. Further, we study linear groups over division rings of these classes. Our  new obtained results generalize precedent results for centrally finite division rings.
\end{abstract}

{\bf {\em Key words:}}  Division ring; algebraic; strongly algebraic;  locally linear; linear groups. 

{\bf{\em  Mathematics Subject Classification 2010}}: 16K20, 16K40 

\newpage
\section{Introduction}

Let $D$ be a division ring and $F$ be its center. Recall that $D$ is {\em centrally finite} if $D$ is a finite dimensional vector space over $F$; $D$ is {\em locally centrally finite} if for every finite subset $S$ of $D$, the division subring $F(S)$  of $D$ generated by $S$ over $F$ is a finite dimensional vector space over $F$. If $a$ is an element from $D$, then we have the field extension $F\subseteq F(a)$. Obviously, $a$ is {\em algebraic} over $F$ if and only if this extension is finite. We say that a non-empty subset $S$ of $D$ is {\em algebraic} over $F$ if every element of $S$ is algebraic over $F$. A division ring $D$ is  {\em algebraic} over the center $F$ (briefly, $D$ is {\em algebraic}), if every element of $D$ is algebraic over $F$. Clearly, the class of algebraic division rings contains the class of locally centrally finite division rings and the last class contains the class of centrally finite division rings. It is not difficult to give examples showing that these classes are different. In this paper we give the definition of the class  of so called {\em strongly algebraic division rings}, which lies between the class of centrally finite division rings and the class of algebraic division rings. Also, we define the classes of {\em linear} and {\em locally linear} division rings. The relation between these classes is explained in Section 2. One of our main purposes is to construct in Section 2 the non-trivial examples of rings belonging to our new defined classes. Section 3 is  devoted to the study of subgroups in locally linear division rings. In Section 4 we shall investigate some properties of linear groups over division rings of these new defined classes. Our new obtained results generalize precedent results for centrally finite division rings. The symbols and notation we use in this paper are standard and they should be found in the literature on subgroups in division rings and on skew linear groups.

\section{Definitions and examples}

\begin{defi}\label{def:1.1} 
Let $D$ be a division ring. 
\begin{enumerate}[i)]
  \item We say that $D$ is a {\it linear division ring } if $D$ can be embedded in some centrally finite division ring.  
  \item  We say that $D$ is a {\it locally linear division ring } if  for every finite subset $S$ of $D$, the division subring of $D$ generated by $S$ is linear.
\end{enumerate}
\end{defi}

\begin{defi}\label{def:1.2}
Let $D$ be a division ring which is algebraic over its center $F$. We say that $D$ is {\it strongly algebraic} over $F$ if $D$ contains a maximal subfield $K$ satisfying the following conditions:
\begin{enumerate} [i)]
  \item there exists a subset $S$ of $K$ such that $K=F(S)$,
  \item for each $x\in D$, there are at most finitely many elements $y$ in $S$ such that $xy\neq yx$. 
\end{enumerate}  
\end{defi}

\begin{prop}\label{prop:1.3}
Every centrally finite division ring is strongly algebraic.
\end{prop}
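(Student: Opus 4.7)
The plan is to unpack the definition of ``strongly algebraic'' and verify it using the most obvious candidate for $K$ and $S$. Let $D$ be centrally finite over $F$, so $[D:F]<\infty$. Algebraicity of $D$ over $F$ is immediate, since every $a\in D$ lies in the finite-dimensional $F$-algebra $D$ and hence satisfies a polynomial over $F$. So only conditions (i) and (ii) of Definition~\ref{def:1.2} have to be produced.

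First I would fix a maximal subfield $K$ of $D$ containing $F$. Such a $K$ exists by a standard Zorn's lemma argument on the poset of subfields of $D$ containing $F$ (any chain has as its union again a subfield, and the dimensions are bounded by $[D:F]$). Because $K\subseteq D$, we have $[K:F]\leq [D:F]<\infty$, so $K/F$ is a finite field extension.

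A finite field extension is in particular finitely generated: write $K=F(s_1,\dots,s_n)$ for finitely many elements $s_1,\dots,s_n\in K$, and set $S=\{s_1,\dots,s_n\}$. Then (i) holds by construction, and (ii) is automatic because $S$ itself is finite: for any $x\in D$, the set $\{y\in S : xy\neq yx\}$ is a subset of the finite set $S$, hence finite.

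There is essentially no obstacle here; the content of the proposition is simply that in the centrally finite case one may take the generating subset $S$ of a maximal subfield to be \emph{finite}, which trivially forces condition (ii). The only point that requires citation is the existence of a maximal subfield in any division ring, which is the standard Zorn's lemma observation recalled above.
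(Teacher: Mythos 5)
Your proof is correct and follows essentially the same route as the paper: take a maximal subfield $K\supseteq F$ (the paper cites Draxl instead of running the Zorn's lemma argument), note $[K:F]\le[D:F]<\infty$ so $K=F(S)$ for a finite $S$, and observe that condition (ii) is automatic for finite $S$. No issues.
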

\begin{proof}
Suppose that $D$ is a centrally finite division ring with center $F$. By [\cite{dra}, \S7, Th. 4, p.45], there exists a maximal subfield  $K$ of $D$ containing $F$.  
Since $[K:F]<\infty$, there exists a finite subset $S$ such that $K=F(S)$. Now, by definition we see that $D$ is strongly algebraic over $F$. 
\end{proof}

\begin{thm} \label{thm:1.4}
Let $D$ be a strongly algebraic division ring over its center $F$ and $T$ be a finite subset of $D$. Then, there exists some division subring of $D$ containing $F$ which is  centrally finite by itself and contains $T$. 
\end{thm}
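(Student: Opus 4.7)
The plan is to exploit the finiteness condition in Definition~\ref{def:1.2}(ii) to cut the maximal subfield $K$ down to a subfield that centralizes all of $T$, and then to pass to the centralizer of that subfield in $D$. For each $t\in T$, Definition~\ref{def:1.2}(ii) supplies a finite set $S_t=\{y\in S:ty\neq yt\}$. Since $T$ is finite, $S_0=\bigcup_{t\in T}S_t$ is a finite subset of $S$. Put $L=F(S\setminus S_0)$, a subfield of $K$. By construction every element of $T$ commutes with every element of $S\setminus S_0$, and hence with every element of $L$.

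We then set $R:=C_D(L)$, the centralizer of $L$ in $D$. This is a division subring of $D$ that contains $F$, $T$ and $K$ (the last inclusion holds because $L\subseteq K$ and $K$ is commutative). By definition $L\subseteq Z(R)$, and $K$ remains a maximal subfield of $R$: any commutative overring of $K$ inside $R\subseteq D$ would be a commutative overring of $K$ inside $D$, contradicting the maximality of $K$ in $D$.

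Next we bound $[K:Z(R)]$. Since $K=F(S)=L(S_0)$ and $S_0$ is finite, and since every element of $S_0$ is algebraic over $F\subseteq L$ (because $D$ is algebraic over $F$), the commutative field extension $K/L$ is finite. Because $L\subseteq Z(R)\subseteq K$, we obtain
\[
[K:Z(R)]\ \leq\ [K:L]\ <\ \infty.
\]

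It remains to upgrade this to finite-dimensionality of $R$ itself. Note that $R$ is algebraic over $Z(R)$, inheriting algebraicity from $D$ via $F\subseteq Z(R)$. The final step will invoke the classical theorem (see, e.g., Jacobson's \emph{Structure of Rings}, or Theorem~15.8 in Lam) that an algebraic division ring possessing a maximal subfield of finite degree over its center is itself centrally finite, with total degree $[K:Z(R)]^2$. Applying this to $R$ and $K$ yields that $R$ is centrally finite, proving the theorem. The main obstacle in this plan is precisely the last invocation; every preceding step is direct bookkeeping unwinding Definition~\ref{def:1.2}.
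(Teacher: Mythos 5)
Your proof is correct and takes essentially the same route as the paper: isolate the finite exceptional subset $S_0\subseteq S$, get $[K:F(S\setminus S_0)]<\infty$ from algebraicity over $F$, note that $K$ stays a maximal subfield of the chosen subring with $F(S\setminus S_0)\subseteq Z(\cdot)\subseteq K$, and conclude with Lam's (15.8). The only cosmetic difference is that you pass to the centralizer $R=C_D(L)$ whereas the paper uses the generated subring $K(T)\subseteq C_D(L)$; the argument is otherwise identical, and the algebraicity hypothesis you mention for the final invocation is not actually needed there.
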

\begin{proof}
Suppose that $K=F(S)$ is a maximal subfield of $D$ such that for every element $x$ in $D$, there are at most finitely many elements $y$ in $S$ such that $xy\neq yx$. 
Denote by $U$ the set of elements $s$ in $S$ such that $s$ does not commute at least with one element of $T$. Then, $U$ is a finite subset of $S$ and every element of   $S\setminus U$ commutes with all elements from $T$. Therefore,  $F(S\setminus U)\subseteq Z(K(T))$. Note that, since $K$ is a maximal subfield of $D, K$ is also a maximal subfield of  $K(T)$. It follows that $Z(K(T))\subseteq K$. Further, 
since $K=F(S)=F((S\setminus U)\cup U)=F(S\setminus U)(U)$ and  $U$ is a finite subset algebraic over $F$, we have $[K:F(S\setminus U)]<\infty$. Consequently, $[K:Z(K(T))]<\infty$. Now, by [\cite{lam},(15.8), p.255],  $K(T)$ is centrally finite and obviously, this fact completes  the proof of the theorem.
\end{proof}  

\begin{cor}\label{cor:1.5}
If a division ring $D$ is  strongly algebraic over its center, then $D$ is locally linear. 
\end{cor}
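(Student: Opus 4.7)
The plan is a direct application of Theorem 1.4. Given a strongly algebraic division ring $D$ over $F$, I need to show that for every finite subset $S$ of $D$, the division subring $R$ of $D$ generated by $S$ is linear, i.e., embeds into some centrally finite division ring.

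First I would apply Theorem 1.4 to the finite set $T=S$: this yields a division subring $D'$ of $D$ which contains $F$, contains $S$, and is centrally finite (over its own center). Since $R$ is defined as the smallest division subring of $D$ containing $S$, and $D'$ is a division subring of $D$ containing $S$, we must have $R\subseteq D'$. The inclusion $R\hookrightarrow D'$ is then an embedding of $R$ into a centrally finite division ring, so $R$ is linear in the sense of Definition \ref{def:1.1}(i). As $S$ was an arbitrary finite subset, this proves $D$ is locally linear by Definition \ref{def:1.1}(ii).

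There is essentially no obstacle here, since Theorem 1.4 already does all the work: the only subtlety to point out in the write-up is the observation that the division subring generated by $S$ is automatically contained in any division subring containing $S$, so linearity of the generated subring is immediate from the existence of the centrally finite envelope supplied by the theorem.
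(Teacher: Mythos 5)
Your proposal is correct and is exactly the argument the paper intends: the corollary is stated as an immediate consequence of Theorem \ref{thm:1.4}, applying it to a finite subset $S$ and noting that the division subring generated by $S$ sits inside the resulting centrally finite division subring, hence is linear. No discrepancy with the paper's approach.
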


Our next purpose in this section is to construct some examples showing the difference between  new defined classes and the precedent classes of division rings. In order to do so, first, we shall construct division subrings of the ring $D=K((G, \Phi))$, which was introduced in   \cite{dbh}. 

Namely, if we denote by $G=\bigoplus\limits_{i=1}^\infty \mathbb{Z}$ the direct sum of infinitely many of copies of the additive group $(\mathbb{Z}, +)$ of all integers , then  $G$ is the  set of all infinite sequences of integers of the form  $(n_1, n_2, n_3, \ldots)$ with only finitely many non-zeros  $n_i$. For any positive integer $i$, denote by $x_i= (0, \ldots, 0, 1, 0, \ldots)$ the element of $G$ with  $1$ in the $i$-th position and $0$ elsewhere.  Then $G$ is a free abelian group generated by all $x_i$ and every element $x$ in $G$ is written uniquely in the form
$$x=\sum\limits_{i\in I}n_i x_i,$$
with $n_i\in\mathbb{Z}$ and some finite set $I$. 

Now, we define an order in $G$ as follows:

For elements  $x=(n_1, n_2, n_3, \ldots)$ and  $y=(m_1, m_2, m_3, \ldots)$ in $G$, define  $x<y$ if either  $n_1<m_1$  or there exists $k\in \mathbb{N}$ such that  $n_1=m_1,\ldots, n_k=m_k$ and  $n_{k+1}<m_{k+1}$. Clearly, with this order $G$ is a totally ordered set.

Suppose that  $p_1<p_2<\ldots <p_n<\ldots$ is a sequence of prime numbers and 
$K=\mathbb{Q}(\sqrt{p_1},\sqrt{p_2},\ldots)$ is the subfield of the field $\mathbb{R}$ of real numbers generated by $\mathbb{Q}$ and  $\sqrt{p_1},\sqrt{p_2},\ldots$, where $\mathbb{Q}$ is the field of rational numbers. For any $i\in\mathbb{N}$, suppose that  $f_i:K\to K$ is  $\mathbb{Q}$-isomorphism satisfying the following condition: 
$$f_i(\sqrt{p_j})=\hpt{rl}{\sqrt{p_j},&\text{if }j\neq i;\\ -\sqrt{p_i},&\text{if }j=i.}$$ 

It is easy to verify that $f_i f_j=f_j f_i,\forall i,j\in\mathbb{N}.$ Moreover, we have the following lemma, whose proof can be found in \cite{dbh}:

\begin{lem}\label{lem:2.1}
Suppose that $x\in K$. Then, $f_i(x)=x$ for any $i\in \mathbb{N}$ if and only if  $x\in\mathbb{Q}$.
\end{lem}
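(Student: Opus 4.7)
The forward direction is immediate: if $x\in\mathbb{Q}$, then since each $f_i$ is defined to be a $\mathbb{Q}$-isomorphism, $f_i(x)=x$ for every $i$. The content of the lemma is the converse.

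My plan for the converse is to reduce to a finite extension and use a $\mathbb{Q}$-basis argument. Since $K=\mathbb{Q}(\sqrt{p_1},\sqrt{p_2},\ldots)$ is the union of the nested subfields $L_n=\mathbb{Q}(\sqrt{p_1},\ldots,\sqrt{p_n})$, any element $x\in K$ actually lies in some $L_n$. So the problem reduces to showing: if $x\in L_n$ and $f_j(x)=x$ for $j=1,\ldots,n$, then $x\in\mathbb{Q}$. (For indices $j>n$, the equation $f_j(x)=x$ is automatic because $f_j$ fixes every $\sqrt{p_i}$ with $i\le n$, hence fixes $L_n$ pointwise.)

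Next I would invoke the classical fact that the $2^n$ products
\[
e_S=\prod_{i\in S}\sqrt{p_i},\qquad S\subseteq\{1,2,\ldots,n\},
\]
form a $\mathbb{Q}$-basis of $L_n$. Writing $x=\sum_{S}a_S\,e_S$ with $a_S\in\mathbb{Q}$, one computes
\[
f_j(x)=\sum_{S}(-1)^{\mathbf{1}_{\{j\in S\}}}a_S\,e_S.
\]
The hypothesis $f_j(x)=x$ and the linear independence of the $e_S$ force $a_S=0$ whenever $j\in S$. Doing this for each $j\in\{1,\ldots,n\}$ kills every coefficient except $a_\emptyset$, leaving $x=a_\emptyset\in\mathbb{Q}$.

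The only nontrivial step is the $\mathbb{Q}$-linear independence of the $e_S$, equivalently the statement that $[L_n:\mathbb{Q}]=2^n$. I would prove this by induction on $n$: the base case $n=1$ is clear since $\sqrt{p_1}\notin\mathbb{Q}$, and for the inductive step one must show $\sqrt{p_{n+1}}\notin L_n$. This is the main obstacle, but it is a standard result: assuming $\sqrt{p_{n+1}}\in L_n$, one writes $\sqrt{p_{n+1}}=u+v\sqrt{p_k}$ for some $k\le n$ with $u,v\in L_{n-1}'$ (the subfield generated by the other square roots), squares, and uses unique factorization in $\mathbb{Z}$ together with the inductive independence to derive a contradiction from the distinctness of the primes $p_1,\ldots,p_{n+1}$. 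Once this independence is in hand, the coefficient-killing argument above finishes the proof.
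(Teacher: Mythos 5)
Your proof is correct in outline and is essentially the standard argument; note that the paper itself does not prove Lemma \ref{lem:2.1} but refers to \cite{dbh}, where the same kind of reduction to the finite multiquadratic subfields $\mathbb{Q}(\sqrt{p_1},\ldots,\sqrt{p_n})$ and the $\mathbb{Q}$-basis of products $e_S$ is the natural route. The forward direction, the reduction of $x\in K$ to some $L_n$, and the coefficient-killing step (from $f_j(e_S)=-e_S$ when $j\in S$ and linear independence, each $a_S$ with $S\neq\emptyset$ must vanish) are all fine. The one place where your sketch is looser than it should be is the induction establishing $[L_n:\mathbb{Q}]=2^n$: the naive inductive statement ``$\sqrt{p_{n+1}}\notin L_n$'' does not close on itself, because after writing $\sqrt{p_{n+1}}=u+v\sqrt{p_n}$ with $u,v\in L_{n-1}$ and squaring, the degenerate cases $v=0$ and $u=0$ leave you needing $\sqrt{p_{n+1}}\notin L_{n-1}$ and $\sqrt{p_np_{n+1}}\notin L_{n-1}$, the latter involving a square root of a composite squarefree number. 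So the induction hypothesis must be strengthened to: for every squarefree integer $d>1$ whose prime factors are not all among $p_1,\ldots,p_{n}$ (equivalently, for all $2^n$ squarefree products of $p_1,\ldots,p_n$ being linearly independent, plus the new primes), $\sqrt{d}\notin L_{n}$; with that strengthening (or by invoking the classical Besicovitch/Kummer-theoretic fact directly, using unique factorization to see that $p_1,\ldots,p_n$ are multiplicatively independent modulo squares), your argument is complete.
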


For an element  $x=(n_1, n_2, ...)=\sum\limits_{i\in I} n_i x_i\in G$, define $\Phi_x:=\prod\limits_{i\in I} f_i^{n_i}.$ Clearly $\Phi_x\in Gal(K/\mathbb{Q})$ and the map
$\Phi: G\to Gal(K/\mathbb{Q}),$
defined by $\Phi(x)=\Phi_x$ is a group homomorphism. It is easy to prove the following proposition:

\begin{prop}\label{prop:2.2}
\begin{enumerate}[{\rm i)}]
  \item $\Phi(x_i)=f_i$ for any $i\in \mathbb{N}$. 
  \item If $x=(n_1, n_2, \ldots)\in G$, then $\Phi_x(\sqrt{p_i})=(-1)^{n_i} \sqrt{p_i}$.
\end{enumerate}
\end{prop}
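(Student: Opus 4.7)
The plan is to verify both parts by direct unpacking of the definitions, using nothing more than Lemma 2.1 and the prescribed action of each $f_i$ on the generators $\sqrt{p_j}$.

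For part i), I would simply note that $x_i = \sum_{j \in I} n_j x_j$ with $I = \{i\}$ and $n_i = 1$. Plugging into the defining formula $\Phi_{x_i} = \prod_{j \in I} f_j^{n_j}$ gives $\Phi_{x_i} = f_i^1 = f_i$, which is part i).

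For part ii), the key observation is that the $f_j$'s pairwise commute, so the finite product $\Phi_x = \prod_{j \in I} f_j^{n_j}$ is well-defined regardless of the order, and each factor $f_j^{n_j}$ may be evaluated independently on $\sqrt{p_i}$. By the definition of $f_j$, for $j \neq i$ we have $f_j(\sqrt{p_i}) = \sqrt{p_i}$, hence $f_j^{n_j}(\sqrt{p_i}) = \sqrt{p_i}$ for all $j \in I$ with $j \neq i$. Thus only the factor $f_i^{n_i}$ acts non-trivially, and since $f_i(\sqrt{p_i}) = -\sqrt{p_i}$, an easy induction on $|n_i|$ yields $f_i^{n_i}(\sqrt{p_i}) = (-1)^{n_i}\sqrt{p_i}$. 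I would also briefly cover the boundary case $i \notin I$, where $n_i = 0$, so that $(-1)^{n_i} = 1$ and $f_i^0 = \mathrm{id}$ indeed fix $\sqrt{p_i}$, keeping the formula uniformly valid.

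There is essentially no obstacle here: the proposition is a direct bookkeeping check that the definition of $\Phi$ on the free generators $x_i$ of $G$ agrees with the assignment $x_i \mapsto f_i$, together with the coordinate-wise computation of $\Phi_x(\sqrt{p_i})$ afforded by commutativity of the $f_j$'s. The only mildly subtle point worth stating explicitly is that commutativity of the $f_j$'s (mentioned just before the proposition) is what lets us write $\Phi_x$ as an unordered product and evaluate factor by factor; beyond this, the proof is a one-line calculation for each item.
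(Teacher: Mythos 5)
Your proof is correct and is precisely the routine verification the paper has in mind: the paper states this proposition without proof, remarking only that it is easy to prove, and your direct unpacking of the definition of $\Phi_x$ together with the commutativity of the $f_j$'s and the action $f_j(\sqrt{p_i})=\sqrt{p_i}$ for $j\neq i$, $f_i(\sqrt{p_i})=-\sqrt{p_i}$ is exactly that check. The only detail worth making explicit is that for negative $n_i$ one uses $f_i^2=\mathrm{id}$ (so $f_i^{-1}=f_i$), which your induction on $|n_i|$ implicitly covers.
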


For the convenience, from now on we write the operation in  $G$ multiplicatively. For $G$ and $K$ as above, consider formal sums of the form
$$\alpha=\sum\limits_{x\in G} a_x x,\quad a_x\in K.$$

For  such an $\alpha$, define the support of $\alpha$ by $supp(\alpha)=\{x\in G: a_x\neq 0\}$. Put
 $$D=K((G,\Phi)):=\Big\{\alpha=\sum\limits_{x\in G} a_x x, a_x\in K ~\vert~ supp(\alpha) \text{ is  well-ordered }\Big\}.$$ 

For  $\alpha=\sum\limits_{x\in G} a_x x$ and $\beta=\sum\limits_{x\in G} b_x x$ from $D$, define 
\begin{eqnarray*}
\alpha+\beta&=&\sum\limits_{x\in G} (a_x+b_x) x;\\
\alpha.\beta&=&\sum\limits_{z\in G} \Big(\sum\limits_{xy=z}a_x \Phi_x(b_y)\Big) z.
 \end{eqnarray*}

In [\cite{lam}, p.243], it is proved that these operations are well-defined. Moreover, the following theorem holds: 
\begin{thm}[\cite{lam}, Th.(14.21), p.244] \label{thm:2.3}
$D=K((G,\Phi))$ with the operations, defined as above is a division ring.
\end{thm}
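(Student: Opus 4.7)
The plan is to follow the standard Mal'cev--Neumann construction of a twisted series division ring. The ring axioms for $D$ are a routine bookkeeping check once one grants that the formulas for $\alpha+\beta$ and $\alpha\beta$ actually define elements of $D$: addition causes no issue because the union of two well-ordered subsets of $G$ is well-ordered, and for multiplication one uses (as recorded in \cite{lam}) that $\{(x,y)\in \mathrm{supp}(\alpha)\times\mathrm{supp}(\beta): xy=z\}$ is finite for every $z\in G$ and that $\mathrm{supp}(\alpha\beta)$ is again well-ordered. Associativity of the twisted product requires precisely the identity $\Phi_x\Phi_y=\Phi_{xy}$, i.e. that $\Phi$ is a group homomorphism. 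The only substantive content is therefore that every nonzero $\alpha\in D$ possesses a two-sided inverse.

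Given a nonzero $\alpha=\sum a_x x$, I would pick $x_0=\min\mathrm{supp}(\alpha)$, which exists because the support is well-ordered. Every monomial $ax$ with $a\in K^\times$ and $x\in G$ is a unit in $D$ with two-sided inverse $\Phi_{x^{-1}}(a^{-1})\,x^{-1}$, as follows directly from the twisted product formula. Left-multiplying $\alpha$ by the inverse of $a_{x_0}x_0$ therefore reduces the problem to inverting an element of the form $1-\beta$ with $\mathrm{supp}(\beta)\subseteq G_{>1}$; here we use $x_0^{-1}\mathrm{supp}(\alpha)\subseteq G_{\geq 1}$ together with subtraction of the constant term $1$.

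To invert $1-\beta$ I would write the formal geometric series $\gamma:=\sum_{n\geq 0}\beta^n$ and verify directly that $\gamma\in D$ and that $(1-\beta)\gamma=\gamma(1-\beta)=1$. The key technical input is B.\,H.\,Neumann's lemma on well-ordered subsets of a totally ordered group: if $T\subseteq G_{>1}$ is well-ordered, then $T^*:=\bigcup_{n\geq 0}T^n$ is again well-ordered and each $z\in T^*$ admits only finitely many factorizations $z=t_1t_2\cdots t_n$ with $t_i\in T$. Applied to $T=\mathrm{supp}(\beta)$, this guarantees simultaneously that each coefficient of $\gamma$ is a finite $K$-linear combination and that $\mathrm{supp}(\gamma)$ is well-ordered, so $\gamma\in D$. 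The telescoping identity $(1-\beta)\sum_{n=0}^{N}\beta^n=1-\beta^{N+1}$ then passes to the limit coefficient-by-coefficient, since for any fixed $z\in G$ the coefficient of $z$ in $\beta^{N+1}$ vanishes for all sufficiently large $N$ (only finitely many $n$ contribute anything to the coefficient of $z$ in $\gamma$).

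The principal obstacle is Neumann's lemma itself, which is the technical heart of the whole Mal'cev--Neumann construction and is precisely what forces the well-ordered (rather than arbitrary) support condition built into the definition of $D$. Once that lemma is granted, everything else is formal manipulation with the twisted multiplication, and I would simply cite \cite{lam} for Neumann's lemma and assemble the reduction to $1-\beta$ together with the geometric series argument above.
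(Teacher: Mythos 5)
Your proposal is correct and is essentially the standard Mal'cev--Neumann argument that the paper relies on: the paper gives no proof of its own but cites Lam, Th.\ (14.21), whose proof is exactly your reduction to inverting $1-\beta$ with support in $G_{>1}$ via the geometric series, with Neumann's lemma on well-ordered subsets supplying the convergence. No gaps to report.
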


Put $H:=\{x^2: x\in G\}$ and
$$\mathbb{Q}((H)):=\Big\{\alpha=\sum\limits_{x\in G} a_x x, a_x\in \mathbb{Q} ~\vert~ supp(\alpha) \text{ is  well-ordered }\Big\}.$$ 

It is easy to check that  $H$ is a subgroup of $G$ and for every $x\in H$, $\Phi_x=Id_K$. Note that in  [\cite{dbh}, Theorem 2.2] it was proved that  $\mathbb{Q}((H))$ is the center of $D$.  

Now, for $n\geq 1$, denote by $L_n:=F(\sqrt{p_1},\ldots, \sqrt{p_n}, x_1, \ldots, x_n)$. Then, $L_n\subseteq L_{n+1}$ and $L_{\infty}:=\bigcup\limits_{n=1}^{\infty}L_n$ is the division subring generated by all $\sqrt{p_i}$ and all $x_i$ over $F$.

\begin{prop}\label{prop:2.7}
The division ring  $L_\infty$ satisfies the following conditions:
\begin{enumerate}[{\rm i)}]
  \item $L_\infty$ is locally centrally finite.
  \item $L_\infty$ is strongly algebraic over its center.
  \item $L_\infty$ is not linear.
\end{enumerate}
\end{prop}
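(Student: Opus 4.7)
My plan is to extract all three claims from a single structural fact: for each $n$, the division ring $L_n$ is a central simple $F$-algebra of dimension $4^n$, realized as an iterated quaternion algebra built from the $n$ anticommuting pairs $(\sqrt{p_i},x_i)$. I would establish this by first using the defining relations $\sqrt{p_i}^2=p_i$, $x_i^2\in H\subseteq F$, $x_ix_j=x_jx_i$, and $x_i\sqrt{p_j}=(-1)^{\delta_{ij}}\sqrt{p_j}x_i$ to span $L_n$ over $F$ by the $4^n$ monomials $\sqrt{p_1}^{a_1}\cdots\sqrt{p_n}^{a_n}x_1^{b_1}\cdots x_n^{b_n}$ with $a_i,b_i\in\{0,1\}$, and then proving their linear independence by reading supports inside $D=K((G,\Phi))$: distinct exponent vectors $(b_1,\ldots,b_n)$ place the monomials on distinct cosets of $H=G^2$ in $G$, and within a fixed coset independence reduces coefficient-by-coefficient to the classical $\mathbb{Q}$-independence of the $2^n$ square-free products of the $\sqrt{p_i}$. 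A short calculation using Lemma~\ref{lem:2.1} then shows $Z(L_n)=F$.

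Granting this, part (i) is immediate: any finite $T\subset L_\infty$ lies in some $L_n$ because $L_\infty=\bigcup L_n$ is an ascending union, and the division subring generated by $T$ over the center is contained in $L_n$, which is $4^n$-dimensional over $F$. For (ii) I would take $S=\{\sqrt{p_i}\}_{i\ge 1}$ and $M=F(S)$. Given $x\in L_\infty$, choose $n$ with $x\in L_n$; then $\sqrt{p_i}$ commutes with every generator of $L_n$, and hence with $x$, as soon as $i>n$, which verifies the finiteness clause in Definition~\ref{def:1.2}. To see that $M$ is actually a maximal subfield of $L_\infty$, I would argue that $F(\sqrt{p_1},\ldots,\sqrt{p_n})$, being of $F$-dimension $2^n=\sqrt{[L_n:F]}$, is a maximal (hence self-centralizing) subfield of the centrally finite $L_n$; any $y\in L_\infty$ centralizing $M$ lies in some $L_n$, centralizes $F(\sqrt{p_1},\ldots,\sqrt{p_n})$ there, and is therefore forced into $M$.

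For (iii) I argue by contradiction. Suppose $L_\infty$ embeds into a centrally finite division ring $D'$ with $[D':Z(D')]=d^2$. By the Amitsur--Levitzki theorem, $D'$ satisfies the standard polynomial identity $s_{2d}$, and this identity descends to every subring, in particular to each $L_n\subseteq L_\infty\subseteq D'$. But $L_n$ is central simple of PI-degree $2^n$, hence does not satisfy $s_{2\cdot 2^n-1}$; consequently $d\ge 2^n$ for every $n$, contradicting the finiteness of $d$.

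The main obstacle is the structural claim about $L_n$: the spanning half is transparent, but pinning down the exact $F$-dimension $4^n$ and identifying $Z(L_n)$ with $F$ require descending into the explicit twisted power-series structure of $D$ rather than appealing to an abstract ``tensor product of $n$ quaternion algebras,'' which could a priori degenerate over the specific center $F=\mathbb{Q}((H))$. Once this structural fact is in place, (i), (ii), and (iii) follow as short corollaries along the lines above.
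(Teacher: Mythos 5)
Your proposal is correct, and while parts i) and ii) run close to the paper, part iii) takes a genuinely different route. For i) and ii) the paper simply cites the structural facts about $L_n$ (centrally finite, $Z(L_n)=Z(L_\infty)=F$, the normal form of elements of $L_n$) from the companion paper \cite{dbh}, whereas you rebuild them from the monomial basis $(\sqrt{p_1})^{\varepsilon_1}\cdots(\sqrt{p_n})^{\varepsilon_n}x_1^{\mu_1}\cdots x_n^{\mu_n}$, proving independence by separating supports along cosets of $H=G^2$ and then invoking the classical $\Q$-independence of square-free radical products; that is exactly the right mechanism and is essentially what \cite{dbh} does. For the maximality of $K_\infty=F(\sqrt{p_1},\sqrt{p_2},\dots)$ you use the centralizer (dimension-count) theorem inside each $L_n$, where the paper instead verifies $C_{L_\infty}(K_\infty)=K_\infty$ by a direct support computation (an element outside $K_\infty$ involves some $x_i$ and then fails to commute with $\sqrt{p_i}$) and cites [\cite{lam}, Prop.\ (15.7)]; both are valid, and yours has the small extra cost of needing $[K_n:F]=2^n=\sqrt{[L_n:F]}$ exactly. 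The real divergence is iii): the paper's argument is elementary---if $L_\infty\subseteq L$ with $L$ centrally finite, the set $\{x_i\}$ is linearly independent over $Z(L)$ because any dependence $x_n\in\mathrm{span}_{Z(L)}(x_1,\dots,x_{n-1})$ would force $\sqrt{p_n}$ to commute with $x_n$; this needs only the commutation relations, not the dimension of $L_n$. Your PI-theoretic argument (a centrally finite $L$ of degree $d$ satisfies $s_{2d}$, which descends to $L_n$, while a central division algebra of degree $2^n$ satisfies no identity of degree below $2\cdot 2^n$) is also correct, but it leans on the full structural claim $[L_n:Z(L_n)]=4^n$, so the "main obstacle" you flag is genuinely load-bearing for your version of iii) where it is not for the paper's; in exchange, your argument exhibits a general obstruction (unbounded PI-degree of subrings) that works without any bespoke commutator trick. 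One trivial point to make explicit in ii): Definition \ref{def:1.2} presupposes that $L_\infty$ is algebraic over $F$, which you should note follows at once from i), as the paper does.
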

\begin{proof}
i) For a finite subset $S\subseteq L_\infty$, since  $L_{\infty}=\bigcup\limits_{n=1}^{\infty}L_n$, there exists some  $n$ such that $S\subseteq L_n$. By Lemma 3.1 and Theorem 3.3 in \cite{dbh}, $Z(L_n)=Z(L_{\infty})=F$ and $L_n$ is centrally finite. Consequently, the division subring of $L_{\infty}$ generated by $S$ over $F$ is finite dimensional over $F$. Hence $L_\infty$ is locally centrally finite.

ii) By i), $L_\infty$ is locally  centrally finite with the center $F$. So, $L_\infty$ is algebraic over $F$. Denote by $K_\infty =F(\sqrt{p_1},\sqrt{p_2},\ldots)$ the subfield of $L_\infty$ generated by $\sqrt{p_1},\sqrt{p_2},\ldots$ over $F$ and 
suppose that   $\alpha\in C_{L_\infty}(K_\infty)\setminus K_\infty$. Then, there exists some $i$ such that $x_i$ appears in the expression of $\alpha$ as a formal sum. Since $x_i^2\in F$,   $\alpha$ can be expressed in the form
$\alpha= \beta x_i+\gamma$, where  $\beta\neq 0$ and  $x_i$ does not appear in the formal expressions of $\beta$ and  $\gamma$.
Therefore, $\sqrt{p_i}\alpha-\alpha \sqrt{p_i}=2\beta\sqrt{p_i} x_i\neq 0$. It follows that $\alpha$ does not commute with  $\sqrt{p_i}\in K_{\infty}$, that is a contradiction. Hence, $C_{L_\infty}(K_\infty)=K_\infty$. 
In  view of [\cite{lam}, Prop. (15.7),p.254], we have
$K_{\infty}$ is a maximal subfield of $L_\infty$. 

Moreover, we have $K_\infty = F(S)$, where $S=\{\sqrt{p_1}, ..., \sqrt{p_n}, ...\}$.
Since  $L_\infty=\bigcup\limits_{n=1}^\infty L_n$, for  $\alpha\in L_\infty$, there exists  $n$ such that  $\alpha\in L_n$. 
From the proof of [\cite{dbh}, Lemma 3.1 i)], we see that every element of  $L_n$ can be expressed in the form 
$$\alpha = \sum_{0\leq \varepsilon_i, \mu_i\leq 1} a_{(\varepsilon_1, ..., \varepsilon_n,  \mu_1, ..., \mu_n)} (\sqrt{p_1})^{\varepsilon_1}\ldots (\sqrt{p_n})^{\varepsilon_n} x_1^{\mu_1} \ldots x_n^{\mu_n},\quad  a_{(\varepsilon_1, ..., \varepsilon_n,  \mu_1, ..., \mu_n)}\in F.$$
Therefore, $\alpha$ commutes with each from elements $\sqrt{p_{n+1}}$, $\sqrt{p_{n+2}},\ldots$ .  By definition we see that 
$L_\infty$ is strongly algebraic over $F$.

iii) Suppose that $L_\infty$ is linear. This means that, there exists some centrally finite division ring $L$ such that $L_\infty\subseteq L$. Since $x_i$ does not commute with  $\sqrt{p_i}, x_i\not\in Z(L)$. We claim that the set 
$B=\{x_i: i=1,2,...,\}$ is linearly independent over $Z(L)$. Thus, suppose that 
$B$ is linearly dependent over  $Z(L)$. Then, there exists some $n$ such that $x_n$ is a linear combination  of  $x_1, \ldots, x_{n-1}$ over $Z(L)$. Since  $\sqrt{p_n}$ commutes with $x_1, \ldots, x_{n-1},$ $\sqrt{p_n}$ commutes with $x_n$, which is a contradiction. Thus,  $B$ is an infinite linearly independent set over  $Z(L)$ that is impossible. 
\end{proof} 

Suppose that  $\alpha = x_1^{-1}+x_2^{-1}+\ldots$ is an infinite sum.  
Since  $x_1^{-1}<x_2^{-1}<\ldots,  supp(\alpha)$ is  well-ordered. Hence $\alpha\in D$. Put 
$$R_n=L_n(\alpha)=F(\sqrt{p_1},\sqrt{p_2},\cdots, \sqrt{p_n},x_1,x_2,\cdots x_n, \alpha),\forall n\ge 1;$$
and  $$R_\infty=\bigcup_{n=1}^\infty R_n.$$

The following theorem holds:
\begin{prop}\label{md-2.3}
The division ring $R_\infty$ is locally linear and it is not algebraic over its center.  
\end{prop}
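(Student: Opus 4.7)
The plan is to prove the two assertions separately: local linearity will follow from a clean structural description of each $R_n$, while the non-algebraicity rests on a combinatorial transcendence argument, which I expect to be the main technical obstacle.

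For local linearity, my first move is to introduce $\alpha_n := \alpha - (x_1^{-1} + \cdots + x_n^{-1}) = \sum_{i > n} x_i^{-1}$, so that $R_n = L_n(\alpha_n)$. A direct calculation using $\Phi_{x_i^{-1}} = f_i^{-1}$ (which fixes $\sqrt{p_j}$ whenever $i > n \ge j$) together with the commutativity of $G$ shows that $\alpha_n$ commutes with every generator of $L_n$, and hence $\alpha_n$ is central in $L_n(\alpha_n)$. Since $L_n$ is centrally finite with center $F$ by Proposition~\ref{prop:2.7}(i), the multiplication map $L_n \otimes_F F(\alpha_n) \to D$ is a well-defined ring homomorphism whose source is a central simple $F(\alpha_n)$-algebra; simplicity forces injectivity, and the image is a centrally finite division subring of $D$ containing $L_n$ and $\alpha_n$, hence equal to $R_n$. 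Thus every $R_n$ is centrally finite and, in particular, linear. Given any finite $S \subseteq R_\infty = \bigcup_n R_n$, one has $S \subseteq R_n$ for some $n$, and the division subring of $R_\infty$ generated by $S$ embeds into the centrally finite $R_n$; so $R_\infty$ is locally linear.

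For non-algebraicity over the center, I would first identify $Z(R_\infty) = F$. The inclusion $F \subseteq Z(R_\infty)$ is immediate. Conversely, any $\beta \in Z(R_\infty)$ commutes with every $\sqrt{p_k}$ and every $x_k$; writing $\beta = \sum_{y \in G} b_y y$ in $D$, commutation with the $\sqrt{p_k}$'s forces $b_y = 0$ whenever some coordinate of $y$ is odd, so $\mathrm{supp}(\beta) \subseteq H$, and then commutation with the $x_k$'s combined with Lemma~\ref{lem:2.1} forces $b_y \in \mathbb{Q}$. Hence $\beta \in \mathbb{Q}((H)) = F$.

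The crux is to exhibit an element of $R_\infty$ transcendental over $F$. I take $\alpha$ itself and reduce to showing that $\gamma := \sum_{i < j} x_i^{-1} x_j^{-1}$ is transcendental over $F$, via the identity $\alpha^2 = \sum_i x_i^{-2} + 2\gamma$ (note $\sum_i x_i^{-2} \in F$). Assume a minimal polynomial relation $\gamma^e + c_{e-1} \gamma^{e-1} + \cdots + c_0 = 0$ with $c_i \in F$ and $e \ge 1$. The plan is to compare coefficients of the distinguished group element $z := x_1^{-1} x_2^{-1} \cdots x_{2e}^{-1}$ on both sides in $D$. On the left, the coefficient of $z$ in $\gamma^e$ counts ordered $e$-tuples of pairs $\{i_k, j_k\}$ (with $i_k < j_k$) that partition $\{1, \ldots, 2e\}$, which equals $(2e)!/2^e \ne 0$. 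On the right, each $c_i \gamma^i$ with $i < e$ contributes $0$: any $g \in \mathrm{supp}(\gamma^i)$ has total multi-degree $2i$, and writing $g = \prod_k x_k^{-m_k}$, the requirement $h := z g^{-1} \in \mathrm{supp}(c_i) \subseteq H$ forces $m_k$ to be odd (hence $\ge 1$) for every $k \le 2e$, giving $\sum_k m_k \ge 2e > 2i$, a contradiction. This forces a contradiction with the assumed relation and shows $\gamma$ is transcendental over $F$; hence so is $\alpha$, and $R_\infty$ is not algebraic over $Z(R_\infty) = F$. The key combinatorial ingredient is the evenness constraint $\mathrm{supp}(c_i) \subseteq H$ imposed by centrality, which is precisely what blocks any cancellation at $z$.
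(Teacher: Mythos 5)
Your proof is correct, and while the first half follows the paper's own route, the second half is genuinely different in character. For local linearity you do exactly what the paper does: pass to $\alpha_n=\alpha-(x_1^{-1}+\cdots+x_n^{-1})$, check it commutes with the generators of $L_n$, and conclude $R_n=L_n(\alpha_n)$ is finite-dimensional over $F(\alpha_n)\subseteq Z(R_n)$; the paper exhibits the explicit spanning set of monomials $(\sqrt{p_1})^{\varepsilon_1}\cdots x_n^{\mu_n}$ over $F(\alpha_n)$, whereas you package the same finiteness via the surjection from $L_n\otimes_F F(\alpha_n)$ onto $R_n$. Two small remarks there: the injectivity/central-simplicity part of your tensor argument is not actually needed (only the bound $[R_n:F(\alpha_n)]\leq[L_n:F]<\infty$ and the centrality of $F(\alpha_n)$ are used), and it silently relies on $Z(L_n)=F$, which is true but comes from the proof of Proposition \ref{prop:2.7}(i) (via \cite{dbh}), not from its statement. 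For non-algebraicity the paper simply cites \cite{dbh}: Theorem 3.3 there for $Z(L_\infty)=F$ (hence $Z(R_\infty)=F$) and the proof of Theorem 3.1 there for the transcendence of $\alpha$ over $F$. You instead give a self-contained proof: the support computation showing $Z(R_\infty)\subseteq\mathbb{Q}((H))=F$ is correct, and your coefficient-count at $z=x_1^{-1}\cdots x_{2e}^{-1}$ — leading term contributing $(2e)!/2^e\neq 0$, lower terms killed by the parity constraint $\mathrm{supp}(c_i)\subseteq H$ — is a valid transcendence argument. This buys independence from the unpublished reference at the cost of length; note also that the detour through $\gamma=\tfrac12\bigl(\alpha^2-\sum_i x_i^{-2}\bigr)$ is unnecessary, since the identical parity argument applied at $x_1^{-1}\cdots x_e^{-1}$ shows directly that $\alpha$ itself satisfies no monic relation over $F$.
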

\begin{proof} 
First, we prove that $R_n$ is centrally finite for each positive integer $n$.
Consider the element  $$\alpha_n=x_{n+1}^{-1}+x_{n+2}^{-1}+\cdots \quad \text{(infinite sum)}.$$ 
Since $\alpha_n =\alpha -(x_1^{-1}+x_2^{-1}+\cdots+x_n^{-1}),\alpha_n\in R_n$ and 
$$F(\sqrt{p_1},\sqrt{p_2},\ldots, \sqrt{p_n},x_1,x_2,\ldots x_n, \alpha)=F(\sqrt{p_1},\sqrt{p_2},\ldots, \sqrt{p_n},x_1,x_2,\ldots x_n, \alpha_n).$$
Note that $\alpha_n$ commutes with all  $\sqrt{p_i}$ and all $x_i$ (for $i=1,2,...,n$).  Therefore
\begin{eqnarray*}
R_n&=&F(\sqrt{p_1},\sqrt{p_2},\cdots, \sqrt{p_n},x_1,x_2,\cdots x_n, \alpha_n)\\
&=&F(\alpha_n)(\sqrt{p_1},\sqrt{p_2},\cdots, \sqrt{p_n},x_1,x_2,\cdots x_n).
\end{eqnarray*}
In combination with the equalities  
$$(\sqrt{p_i})^2=p_i, x_i^2\in F, \sqrt{p_i}x_j=x_j\sqrt{p_i}, i\ne j, \sqrt{p_i}x_i=-x_i\sqrt{p_i},$$
it follows that every element $\beta$ of  $R_n$ can be written in the form 
$$\beta = \sum\limits_{0\leq \varepsilon_i, \mu_i\leq 1} a_{(\varepsilon_1, ..., \varepsilon_n,  \mu_1, ..., \mu_n)} (\sqrt{p_1})^{\varepsilon_1}\ldots (\sqrt{p_n})^{\varepsilon_n} x_1^{\mu_1} \ldots x_n^{\mu_n},$$
where 
$$a_{(\varepsilon_1, ..., \varepsilon_n\mu_1, ..., \mu_n)}\in F(\alpha_n).$$
Hence  $R_n$ is a vector space over  $F(\alpha_n)$ having the finite set $B_n$ which consists of the products  
$$(\sqrt{p_1})^{\varepsilon_1}\ldots (\sqrt{p_n})^{\varepsilon_n} x_1^{\mu_1} \ldots x_n^{\mu_n}, 0\le \varepsilon_i, \mu_i\le 1$$
 as a base.
Thus,  $R_n$ is a finite dimensional vector space over $F(\alpha_n)$. 
Since  $\alpha_n$ commutes with all  $\sqrt{p_i}$ and all $x_i, F(\alpha_n)\subseteq Z(R_n)$. It follows that  $\dim_{Z(R_n)}R_n\le \dim_{F(\alpha_n)}R_n<\infty$ and consequently, $R_n $ is centrally finite. 

For any finite subset $S\subseteq R_\infty$, there exists  $n$ such that $S\subseteq R_n$. Therefore, the division subring of $R_\infty$, generated by $S$ over $F$ is contained in $R_n$, which is centrally finite. Thus,  $R_\infty$ is locally linear. 

On the other hand, by [\cite{dbh}, Theorem 3.3], we have  $Z(L_\infty)=F$. So, $Z(R_\infty)=Z(L_\infty(\alpha))=F$. The proof of [\cite{dbh}, Theorem 3.1] shows that  $\alpha $ is not algebraic over $F$, hence  $R_\infty$ is not algebraic over $F$. 
\end{proof}

According to the theorem above, the division ring  $R_\infty$ is locally linear. However, it is not algebraic  and consequently it is not strongly algebraic over its center.  

Finally, we strongly believe that  there exists a division ring which is algebraic but it is not strongly algebraic over its center. However, at the present time, we do not have any counterexample to this problem. On the other hand, we believe that any division subring of a centrally finite division ring is itself centrally finite. In fact, we propose the following conjecture:

\noindent
{\bf Conjecture.} {\em Any division subring of a centrally finite division ring is itself centrally finite.}

\section{Locally linear division rings}

\begin{thm}\label{thm:3.2}
If  $D$ is a locally linear division ring, then $Z(D')$ is a torsion group.
\end{thm}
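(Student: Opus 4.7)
The plan is to reduce, in two stages, to the classical Wang-type theorem for centrally finite division rings. Fix $a \in Z(D')$; the goal is to prove $a$ has finite order.

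For the first stage, for every $x \in D^*$ the commutator $[x,a] = xax^{-1}a^{-1}$ lies in $D'$, and the hypothesis $a \in Z(D')$ gives $a[x,a] = [x,a]a$, which rearranges to $a \cdot xax^{-1} = xax^{-1} \cdot a$; that is, $a$ commutes with each of its $D^*$-conjugates. Since $Z(D')$ is normal in $D^*$, every such conjugate is itself in $Z(D')$, so the conjugates of $a$ commute pairwise, and the subfield $K \subseteq D$ they generate is a $D^*$-invariant sub-division-ring. The Cartan--Brauer--Hua theorem then forces either $K = D$ (in which case $D$ is commutative, $D' = \{1\}$, and the conclusion is trivial) or $K \subseteq Z(D)$, giving $a \in Z(D)$.

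For the second stage, write $a = [x_1,y_1] \cdots [x_k,y_k]$ and let $D_0$ be the division subring of $D$ generated by the finite set $\{x_i, y_i\}$; by local linearity, $D_0$ embeds in a centrally finite division ring $L$ with center $F_L$. Inside $L$, $a$ is still expressed by the same commutators, so $a \in L'$, and $a$ centralizes $D_0$ because $a \in Z(D)$. The element $a$ need not be central in $L$, so I would pass to the centralizer $E := C_L(a)$. The double centralizer theorem gives $[E:F_L] \cdot [F_L(a):F_L] = [L:F_L]$, so $E$ is again centrally finite; moreover $D_0 \subseteq E$, hence $a \in E'$, and $a \in Z(E)$ by construction. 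The reduced-norm argument in $E$ (Wang's theorem) then yields $\mathrm{Nrd}_E(a) = a^n$ from centrality (where $n^2 = [E:Z(E)]$) and $\mathrm{Nrd}_E(a) = 1$ from $a \in E'$, so $a^n = 1$ and $a$ is torsion.

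The main obstacle is that local linearity only produces an embedding of a finitely generated subring into a centrally finite $L$ in which $a$ need not be central, while the reduced-norm argument for centrally finite division rings requires centrality. The two ingredients that bridge this gap are the Cartan--Brauer--Hua step, which promotes $a \in Z(D')$ to $a \in Z(D)$ and hence makes $a$ commute with all of $D_0$ after embedding, and the passage from $L$ to the centralizer $E = C_L(a)$, which by the double centralizer theorem preserves centrally finiteness while placing $a$ in $Z(E)$ and retaining the commutator decomposition.
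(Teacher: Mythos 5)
Your proof is correct, and its skeleton is the paper's: reduce to a centrally finite division ring containing the finitely many commutator entries, arrange that the element is central there, and kill the product of commutators with a norm argument. The differences lie in the two bridging steps. Where the paper simply cites [\cite{hdb}, Proposition 2.1] for $Z(D')=D'\cap F$, you reprove it: the conjugates of $a$ all lie in the abelian normal subgroup $Z(D')$, hence generate a $D^*$-invariant subfield, and Cartan--Brauer--Hua forces $a\in F$ unless $D$ is commutative; this makes the argument self-contained. And where the paper arranges centrality by passing to $L_1=Z(L)(S)$ -- in which $x$ is central because it commutes with the generators $S$ and with $Z(L)$, and which is centrally finite because $Z(L)\subseteq Z(L_1)$ and $\dim_{Z(L)}L_1<\infty$ -- you pass instead to the centralizer $E=C_L(a)$ and invoke the double centralizer theorem to see that $E$ is centrally finite with center $F_L(a)\ni a$. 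Both devices work; the paper's is marginally more elementary (no double centralizer theorem needed), while yours avoids any discussion of $Z(L_1)$. Two cosmetic remarks: you do not need Wang's theorem at the end, only that the (reduced) norm is multiplicative with values in the commutative center, so it sends every multiplicative commutator to $1$; and your explicit verification that $a$ commutes with its conjugates is already implied by $Z(D')$ being abelian and normal in $D^*$.
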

\begin{proof} By [\cite{hdb}, Proposition 2.1], $Z(D')=D'\cap F$. For any $x\in Z(D')$, there exists some positive integer $n$ and some $a_i, b_i\in D^*, 1\leq i\leq n$, such that 
$$x=a_1b_1a_1^{-1}b_1^{-1}a_2b_2a_2^{-1}b_2^{-1}\cdots a_nb_na_n^{-1}b_n^{-1}.$$

Set $S:=\{a_i,b_i: 1\leq i\leq n\}$.
Since $D$ is locally linear, there exists a centrally finite division ring $L$  such that  $P(S)$ is embedded in $L$. Clearly, we can suppose that  $S\subseteq L.$
Put $K=Z(L), L_1=K(S)$ and  $F_1=Z(L_1)$. 
Since $K\subseteq L_1\subseteq L$ and  $dim_K L <\infty$, it follows that $K\subseteq F_1$ and  $dim_K L_1<\infty$. Hence $n=\dim_{F_1}L_1<\infty.$ 
On the other hand, since $x\in F, x$ commutes with every element of $S$. Therefore,  $x$ commutes with every element of  $L_1=K(S)$, and consequently,  $x\in F_1$. So,  
$$x^n=N_{L_1/F_1}(x)=N_{L_1/F_1}(a_1b_1a_1^{-1}b_1^{-1}a_2b_2a_2^{-1}b_2^{-1}\cdots a_nb_na_n^{-1}b_n^{-1})=1.$$ 
Thus, $x$ is torsion.
\end{proof}

The following corollary carries over one of Herstein's result [\cite{her}, Theorem 2] to the case of locally linear division rings.  
\begin{cor}\label{cor:3.3}
Let $D$ be a locally linear division ring with the center $F$. If for any   $a,b\in D^*$, there exists a positive integer  $n=n_{ab}$ depending on $a$ and $b$, such that  $(aba^{-1}b^{-1})^n\in F$, then  $D$ is commutative.
\end{cor}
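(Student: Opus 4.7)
The plan is to use Theorem \ref{thm:3.2} to upgrade the hypothesis ``$(aba^{-1}b^{-1})^n \in F$'' into the stronger statement ``$(aba^{-1}b^{-1})^m = 1$'', after which the conclusion follows from Herstein's original theorem.

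I would fix $a, b \in D^*$ and write $c = aba^{-1}b^{-1}$. Since $c$ lies in the derived subgroup $D'$, so does every power of $c$; in particular $c^n \in D'$. Combining with the hypothesis $c^n \in F$ yields $c^n \in D' \cap F$. The proof of Theorem \ref{thm:3.2} already uses Proposition 2.1 of \cite{hdb} to identify $D' \cap F$ with $Z(D')$, so $c^n \in Z(D')$. Theorem \ref{thm:3.2} then says that $Z(D')$ is torsion, whence $c^n$ is torsion and so is $c$ itself. In other words, every multiplicative commutator in $D^*$ has finite order.

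The hypothesis of the corollary has thereby been strengthened to precisely the hypothesis of Herstein's Theorem 2 of \cite{her} in the torsion-commutator form (which does not require any central-finiteness of $D$). Applying that theorem yields the commutativity of $D$. The locally linear assumption is used only once, through the appeal to Theorem \ref{thm:3.2}; the real content of the argument is that this theorem converts the ``power lies in $F$'' condition into the genuine torsion condition that feeds Herstein's result. The main point to check carefully when writing up is that Herstein's theorem is indeed available in this torsion form for an arbitrary division ring, so that no further reduction via the locally linear structure is required after Step 1.
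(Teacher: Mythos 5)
Your proposal is correct and follows essentially the same route as the paper: each commutator $c=aba^{-1}b^{-1}$ lies in $D'$, so $c^n\in D'\cap F=Z(D')$, which is torsion by Theorem \ref{thm:3.2}, hence $c$ is torsion, and Herstein's result that a division ring with all multiplicative commutators torsion is commutative finishes the argument. The only slight discrepancy is the citation: the torsion-commutator statement you invoke is Theorem 1 of \cite{her} (Theorem 2 there is the power-in-center version for centrally finite $D$), which is exactly what the paper cites.
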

\begin{proof}
By Theorem \ref{thm:3.2},  for any  $a,b\in D^*, aba^{-1}b^{-1}$ is torsion . By [\cite{her}, Theorem 1], $D$ is commutative.
\end{proof}
Using Theorem  \ref{thm:3.2}, it is easy to prove that Conjecture 3 in \cite{her} is true for locally linear division rings. In fact, we have the following result:
\begin{thm}\label{thm:3.4}
Let $D$ be a locally linear division ring with the center $F$ and  $N$ be a  subnormal subgroup of  $D^*$. If  $N$ is radical over $F$, then $N$ is central, i.e. $N$ is contained in $F$.
\end{thm}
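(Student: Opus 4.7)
The plan is to use Theorem~\ref{thm:3.2} to show that the commutator subgroup $[N,N]$ is a torsion subgroup of $D^*$, and then to invoke two classical theorems on subnormal subgroups of the multiplicative group of a division ring.

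For any $g\in[N,N]$, write $g$ as a product of commutators of elements of $N$; then $g\in N\cap D'$. Since $N$ is radical over $F$, there is a positive integer $k$ with $g^k\in F$, and so $g^k\in F\cap D'=Z(D')$. By Theorem~\ref{thm:3.2} the subgroup $Z(D')$ is torsion, so $g^k$ has finite order, and hence so does $g$. Consequently $[N,N]$ is a torsion group. Since $[N,N]$ is normal in $N$ and $N$ is subnormal in $D^*$, $[N,N]$ is itself subnormal in $D^*$.

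At this point I would invoke the classical theorem of Herstein that a torsion subnormal subgroup of $D^*$ is central, which yields $[N,N]\subseteq F$. Hence $N$ is metabelian, and in particular solvable; Stuth's theorem that every solvable subnormal subgroup of $D^*$ lies in $F$ then gives $N\subseteq F$, completing the proof.

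The main obstacle is justifying the last two classical ingredients in the generality needed. In the locally linear setting both are standard (or can be derived by a local-linearity reduction to the centrally finite case, where Herstein's original argument for Conjecture~3 of \cite{her} applies); with them in place, Theorem~\ref{thm:3.2} is precisely the ingredient that furnishes the torsion of the commutators, making the rest of the argument essentially formal.
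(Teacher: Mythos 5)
Your proposal is correct and follows essentially the same route as the paper's own proof: pass to $N'=[N,N]$, use radicality over $F$ together with $Z(D')=F\cap D'$ and Theorem~\ref{thm:3.2} to see that $N'$ is torsion and subnormal, apply Herstein's theorem on periodic subnormal subgroups to get $N'\subseteq F$, and finish with the solvable-subnormal (Stuth-type) theorem, which is exactly the paper's citation of [\cite{scott}, 14.4.4]. Your worry about justifying the two classical ingredients is unnecessary, since both hold for arbitrary division rings and need no locally linear hypothesis.
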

\begin{proof}
Consider the subgroup  $N'=[N,N]\subseteq D'$ and suppose that $x\in N'$. Since $N$ is radical over $F$, there exists some positive integer  $n$ such that $x^n\in F$. Hence $x^n \in F\cap D'=Z(D')$. By Theorem  \ref{thm:3.2},  $x^n$ is torsion, and consequently, $x$ is torsion too. Moreover, since  $N$ is subnormal in $D^*$, so is $N'$. Hence, by  [\cite{her}, Th. 8], $N'\subseteq F$. Thus, $N$ is solvable, and by [\cite{scott}, 14.4.4, p. 440], $N\subseteq F$.
\end{proof}

Now, we study subgroups of $D^*$, that are radical over some subring of $D$. To prove the next theorem we need the following useful property of locally linear division rings.

\begin{lem}\label{lem:3.5} 
Let $D$ be a locally linear division ring with the center $F$ and $N$ be a subnormal subgroup of $D^*$. If for every elements $x, y\in N$, there exists some positive integer $n_{xy}$ such that $x^{n_{xy}}y=yx^{n_{xy}}$, then $N\subseteq F$.
\end{lem}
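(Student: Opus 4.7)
The plan is to mimic the architecture of the proof of Theorem~\ref{thm:3.4}, reducing Lemma~\ref{lem:3.5} to showing that $N'=[N,N]$ is a torsion group. Once this is established, $N'$ is a subnormal torsion subgroup of $D^{*}$, so Herstein's [\cite{her}, Theorem~8] forces $N'\subseteq F$, which makes $N$ solvable, and [\cite{scott}, 14.4.4, p.~440] then yields $N\subseteq F$. This is exactly the two-step finish used at the end of the proof of Theorem~\ref{thm:3.4}.

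To show $[x,y]$ is torsion for arbitrary $x,y\in N$, let $n=n_{xy}$ and $m=n_{yx}$ be the exponents supplied by the hypothesis, so that $x^{n}$ commutes with $y$ and $y^{m}$ commutes with $x$. By local linearity, the division subring of $D$ generated by $\{x,y\}$ embeds into some centrally finite division ring $L$; set $K=Z(L)$ and let $L_{1}=K(x,y)\subseteq L$. Being a finitely generated sub-division-ring of a centrally finite ring, $L_{1}$ is itself centrally finite; write $K_{1}=Z(L_{1})\supseteq K$. The centralizer of $x^{n}$ in $L_{1}$ contains $K$, $x$ and $y$, hence is all of $L_{1}$, so $x^{n}\in K_{1}$; symmetrically $y^{m}\in K_{1}$.

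Now put $d=yxy^{-1}x^{-1}$, so that $yxy^{-1}=dx$. Since $x^{n}$ is central in $L_{1}$, $(dx)^{n}=yx^{n}y^{-1}=x^{n}$, and expanding gives
\[
d\cdot (xdx^{-1})\cdot (x^{2}dx^{-2})\cdots (x^{n-1}dx^{-(n-1)})=1,
\]
while a mirror calculation using $y^{m}\in K_{1}$ produces the analogous identity for $m$ conjugates of $d^{-1}$ by powers of $y$. I would now exploit these two compatibility relations, together with the cyclic-algebra-like structure of $L_{1}$ (a finite-dimensional division algebra over $K_{1}$ generated by two elements with central powers), to conclude that $d\in K_{1}$; once this is granted, the first identity collapses to $d^{n}=1$, giving $d$ torsion, as required.

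The crux is promoting the displayed identity (and its mirror) to the stronger statement $d\in K_{1}$. Applying the reduced norm $L_{1}^{*}\to K_{1}^{*}$ is useless here because a commutator automatically has reduced norm $1$. The natural substitute, which I would carry out, is to pass to a splitting field $E/K_{1}$, identify $L_{1}\otimes_{K_{1}}E\cong M_{t}(E)$, and read the two identities as simultaneous eigenvalue constraints on the matrices representing $x$ and $y$; these joint constraints should pin $d$ down to a scalar and bound its order in terms of $n$, $m$ and $[L_{1}:K_{1}]$. Executing the splitting-field bookkeeping cleanly, and making sure \emph{both} identities are genuinely used, is where the technical heart of the proof lies.
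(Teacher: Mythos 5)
Your finish (from ``$N'$ is a subnormal torsion subgroup'' via [\cite{her}, Th.~8] and [\cite{scott}, 14.4.4]) is fine, but the technical heart of your plan fails: the claim that $x^{n}\in K_{1}=Z(L_{1})$ and $y^{m}\in K_{1}$, together with your two displayed identities, force $d=yxy^{-1}x^{-1}$ into $K_{1}$ (hence $d^{n}=1$) is simply false, so no splitting-field bookkeeping can deliver it. Concretely, in the quaternion division algebra over $\mathbb{Q}$ with $i^{2}=j^{2}=-1$, $ij=k=-ji$, take $x=i$ and $y=i+2j$. Then $x^{2}=-1$ and $y^{2}=-5$ are central, so the pair satisfies your hypotheses with $n=m=2$, yet $d=yxy^{-1}x^{-1}=\frac{-3+4k}{5}$, which is neither central nor torsion: it lies in the subfield $\mathbb{Q}(k)\cong\mathbb{Q}(\sqrt{-1})$, whose only roots of unity are $\pm 1,\pm k$. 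Your identity $d\,(xdx^{-1})=1$ does hold here --- it merely says that $x$ inverts $d$ --- and is perfectly compatible with $d$ having infinite order. The moral is that the only information you extract from the hypothesis is local to the single pair $(x,y)$, and that information cannot control the commutator; any correct proof must exploit the power-commuting condition for many pairs in $N$ (conjugates, products of $x$ and $y$, etc.) together with subnormality of $N$ in $D^{*}$. That is what the paper does: it localizes to the division subring $P(x,y)$ generated by $x,y$ (linear by local linearity, hence inside a centrally finite division ring) and reruns the entire argument of [\cite{hdb}, Lem.~3.1] for division rings of type $2$ in that setting, rather than trying to settle matters from the two exponents $n_{xy}$, $n_{yx}$ alone.

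There is also a gap in your reduction step even if torsion commutators were available: $N'$ is \emph{generated} by the commutators $[x,y]$, and a group generated by torsion elements need not be torsion (the infinite dihedral group is generated by two involutions), so ``every commutator is torsion'' does not yield ``$N'$ is a torsion group'' without further argument. The analogous step in Theorem~\ref{thm:3.4} works for a different reason: there the hypothesis (radical over $F$) applies to each element of $N'\subseteq N$ individually, so \emph{every} element of $N'$ is shown torsion before Herstein's theorem is invoked. Here the power-commuting hypothesis says nothing directly about an arbitrary product of commutators, so this bridge would have to be built separately.
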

\begin{proof} 
Replacing  $K:=F(x, y)$ by $K:=P(x, y)$ ($P$ is the prime subfield of $F$) in  the proof of [\cite{hdb}, Lem. 3.1], we can obtain similar proof of this lemma for the case of locally linear division ring instead of the case of division rings of type $2$. 
\end{proof}

Using this lemma, by the same way as in \cite{hdb}, we can prove the following non-trivial theorem whose proof is identified with the proof of Theorem 3.1 in \cite{hdb} for division rings of type $2$. 
\begin{thm}\label{thm:3.6} 
Let $D$ be a locally linear division ring with the center $F,K$ be a proper division subring of $D$ and suppose that $N$ is a normal subgroup of $D^*$. If $N$ is radical over $K$, then $N\subseteq F$.
\end{thm}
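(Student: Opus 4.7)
The overall strategy is to verify the hypothesis of Lemma \ref{lem:3.5}: if for every $x,y\in N$ there is a positive integer $n_{xy}$ such that $x^{n_{xy}}y=yx^{n_{xy}}$, then since $N$ is normal (hence subnormal) in $D^*$, Lemma \ref{lem:3.5} immediately yields $N\subseteq F$. So I would fix arbitrary $x,y\in N$ and work to produce such an exponent.

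To set up the data, I would combine normality with the radical hypothesis: for every $d\in D^*$, the conjugate $dxd^{-1}$ lies in $N$, so some power $dx^{m(d)}d^{-1}=(dxd^{-1})^{m(d)}$ belongs to $K$, and similarly for $y$ and for products such as $xy$, $[x,y]$, etc. Taking $d=1$ already gives $x^{s}\in K$ and $y^{t}\in K$ for positive integers $s,t$. Since $K$ is a proper division subring, I would then fix an element $a\in D\setminus K$, and read off finitely many relations of the form $g_i x^{k_i} g_i^{-1}=\xi_i\in K$, where the conjugators $g_i$ are chosen among $x,y,a,1+a,a+x$ and a handful of their commutators, and the $\xi_i$ are the resulting elements of $K$.

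Having collected a finite set $T=\{x,y,a,g_1,\ldots,g_r,\xi_1,\ldots,\xi_r\}\subseteq D$, I would invoke local linearity: the division subring of $D$ generated by $T$ embeds in some centrally finite division ring $L$. Inside $L$, setting $L_1=Z(L)(T)$, one has a centrally finite division ring containing $x,y,a$ and all the witnessing $K$-elements. The remainder of the argument is then a faithful transcription of Theorem 3.1 of \cite{hdb}: working inside the finite-dimensional $L_1$ over $Z(L_1)$, one combines the finitely many $K$-radical relations with standard tools (reduced norm, Skolem--Noether, Cartan--Brauer--Hua style commutator identities) available in the centrally finite setting, to conclude that some power of $x$ centralizes $y$ in $L_1$, hence in $D$. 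Replacing the prime subfield $P$ for the role played by $F$ in \cite{hdb} (exactly as in the passage from Lemma 3.1 of \cite{hdb} to our Lemma \ref{lem:3.5}) is what keeps the argument valid under the weaker locally linear hypothesis.

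The main obstacle I expect is the bookkeeping of this descent: one must choose $T$ thick enough that every element invoked in the [hdb] manipulation, and every conjugate needed to witness $K$-radicality of the relevant commutators, already lies in the finitely generated subring before the embedding into $L$ is performed. Neither $K\cap L_1$ nor $N\cap L_1$ inherits a useful global structure (subnormality, maximal subfield, etc.), so all exploitation of the radical condition must be done via the explicit witnesses $\xi_i$ carried into $L_1$. Once that is arranged, the type-$2$ proof of \cite{hdb} transports verbatim, because it only ever manipulates finitely many elements inside a finite-dimensional division algebra, and the role of local linearity is precisely to guarantee such a finite-dimensional ambient one finite subset at a time.
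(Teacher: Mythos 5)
Your overall plan---reduce to Lemma \ref{lem:3.5} by producing, for each pair $x,y\in N$, a power of $x$ commuting with $y$---is in the spirit of the paper, but the mechanism you propose for obtaining that quasi-commutation has a genuine gap. The paper does not localize the radicality argument at all: it reruns the proof of Theorem 3.1 of \cite{hdb} word for word inside $D$ itself, the only change being that the one place where the type-2 hypothesis entered (Lemma 3.1 of \cite{hdb}) is replaced by Lemma \ref{lem:3.5}, and local linearity is consumed entirely inside the proof of that lemma, via the subring $P(x,y)$ generated by the pair alone. You invert this: you keep the lemma as a black box and instead try to run the whole \cite{hdb} argument inside a centrally finite ring $L_1$ built from a finite, a priori chosen witness set $T$. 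That is exactly where the proof breaks. The hypotheses that drive the \cite{hdb} argument are global: $N$ is normal in all of $D^*$, so the radical condition is available for conjugates $dxd^{-1}$ with $d$ ranging over every element of $D^*$ (including elements manufactured in the course of the argument), $K$ is a proper division subring of $D$, and the case $K\subseteq F$ is disposed of by a Herstein-type statement about $D$ (here Theorem \ref{thm:3.4}). None of this survives passage to $K\cap L_1$ and $N\cap L_1$, as you yourself concede, and your substitute---a ``handful'' of witnesses $\xi_i$ coming from conjugators chosen among $x,y,a,1+a,a+x$ and a few commutators---is not shown, and cannot be known in advance, to cover the elements whose membership in $K$ the \cite{hdb} manipulations actually invoke, since those elements are generated recursively as the argument proceeds. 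So the sentence ``the type-2 proof of \cite{hdb} transports verbatim'' is precisely the assertion that needs proof, and as stated it is not justified: that proof handles finitely many elements at a time, but which elements, and under which global hypotheses about them, is not controlled by any finite set fixed before the embedding into $L$.

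The repair is to do what the paper does: leave the Theorem 3.1 argument of \cite{hdb} untouched as an argument carried out in $D$ with the full hypotheses (normality of $N$ in $D^*$, properness of $K$ in $D$, radicality of all of $N$ over $K$), and confine the use of local linearity to Lemma \ref{lem:3.5}, whose hypothesis involves no radicality witnesses and whose proof only needs the division subring $P(x,y)$ to embed in a centrally finite division ring. With your architecture, the central step of the theorem remains unproved.
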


\section{Finitely generated skew linear groups}

The following result generalizes Theorem 1 in  \cite{mah1}. 
\begin{thm} \label{thm:4.1}
Let $D$ be a division ring strongly algebraic over its center $F$ and $N$ be a subnormal subgroup of $D^*$.  If $N$ is finitely generated, then $N$ is central.
\end{thm}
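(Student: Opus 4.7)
The plan is to reduce to the case of a centrally finite division ring, where the finitely-generated-subnormal result of \cite{mah1} already applies, and then use Theorem \ref{thm:1.4} to bridge the gap between the strongly algebraic setting and the centrally finite setting.

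Write $N=\langle g_1,\ldots,g_k\rangle$ and fix a subnormal chain $N=N_0\triangleleft N_1\triangleleft\cdots\triangleleft N_m=D^*$. To show $N\subseteq F$, it suffices to pick an arbitrary $d\in D$ and show that every element of $N$ commutes with $d$. I would apply Theorem \ref{thm:1.4} to the finite set $T=\{g_1,\ldots,g_k,d\}$ to obtain a division subring $L$ of $D$ which contains $F\cup T$ and is centrally finite by itself. Since $L$ is a division subring containing all the generators $g_i$, the whole subgroup $N$ lies in $L^*$, and of course $d\in L$.

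The next step is to verify that $N$ remains subnormal inside the smaller group $L^*$. Intersecting the chain above with $L^*$ gives
$$N = N\cap L^*\triangleleft N_1\cap L^*\triangleleft\cdots\triangleleft N_m\cap L^*=L^*,$$
with each normality a routine check (if $A\triangleleft B$ in a group $G$ and $H\le G$ then $A\cap H\triangleleft B\cap H$). Thus $N$ is a finitely generated subnormal subgroup of $L^*$, with $L$ centrally finite. Invoking Theorem 1 of \cite{mah1} then yields $N\subseteq Z(L)$, so in particular every element of $N$ commutes with $d$. Since $d\in D$ was arbitrary, $N$ is contained in the center of $D$, i.e.\ $N\subseteq F$, as required.

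I do not anticipate a serious obstacle. The strongly algebraic hypothesis enters only through Theorem \ref{thm:1.4}, which is exactly tailored to this kind of localization argument, and subnormality transfers to $L^*$ by intersection for free. The only delicate point is to make sure that Theorem 1 of \cite{mah1} is indeed stated in the form we need, namely that every finitely generated subnormal subgroup of the multiplicative group of a centrally finite division ring is central; assuming this, the strategy above turns the problem into a one-line reduction.
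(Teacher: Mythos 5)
Your proof is correct, and its core engine is the same as the paper's: apply Theorem \ref{thm:1.4} to a finite set containing the generators of $N$ to land inside a centrally finite division subring $L$, note that subnormality of $N$ in $D^*$ passes to $L^*$ by intersecting the subnormal chain (a point the paper leaves implicit but which you rightly make explicit), and invoke Theorem 1 of \cite{mah1} to get $N\subseteq Z(L)$. Where you diverge is the final step. The paper stops with $T=\{g_1,\ldots,g_k\}$, concludes only that $N$ is abelian, and then quotes Stuth's theorem ([\cite{scott}, 14.4.4]) that an abelian (solvable) subnormal subgroup of $D^*$ is central. You instead exploit the full strength of Theorem \ref{thm:1.4} by throwing an arbitrary element $d\in D$ into the finite set $T$, so that $N\subseteq Z(L)$ forces every element of $N$ to commute with $d$; letting $d$ range over $D$ gives $N\subseteq F$ directly. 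Your variant is slightly more self-contained, trading the external citation of Stuth's theorem for one extra application of the localization Theorem \ref{thm:1.4}; the paper's version is shorter and isolates the group-theoretic content (subnormal abelian implies central) in a standard reference. Both hinge on \cite{mah1}, Theorem 1 being available in the form you state (finitely generated subnormal subgroups of $GL_n$ of a centrally finite division ring are central), which is exactly how the paper uses it.
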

\begin{proof}
Since $N$ is finitely generated, by Theorem \ref{thm:1.4}, there exists some centrally finite division subring $L$ of $D$ such that  $N\subseteq L$.
By [\cite{mah1}, Th.1], $N\subseteq Z(L)$. Consequently, $N$ is abelian. Now, by 
[\cite{scott}, 14.4.4, p. 440], $N\subseteq F$.
\end{proof}  

In the following we identify $F^*$ with $F^*I:=\{\alpha I\vert~ \alpha\in F^*\}$, where $I$ denotes the identity matrix in $GL_n(D)$. 
\begin{thm}\label{thm:4.2}
Let $D$ be a division ring strongly algebraic over its center $F$ and $N$ be a infinite  subnormal subgroup of $GL_n(D)$ with  $n\geq 1$. If $N$ is finitely generated, then  $N\subseteq F$.
\end{thm}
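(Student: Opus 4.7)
The plan is to mirror the proof of Theorem~\ref{thm:4.1}: descend from the strongly algebraic division ring $D$ to a centrally finite division subring $L$ in which a corresponding theorem is already available, and then transfer the conclusion back up to $GL_n(D)$ by an abelian-subnormal-implies-central argument.

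First, since $N$ is finitely generated, fix generators $g_1,\ldots,g_k\in GL_n(D)$ and let $T\subseteq D$ be the finite set of all entries of the matrices $g_1^{\pm 1},\ldots,g_k^{\pm 1}$. By Theorem~\ref{thm:1.4} there exists a centrally finite division subring $L\subseteq D$ with $F\cup T\subseteq L$; in particular $N\subseteq GL_n(L)$. Moreover, subnormality descends to $GL_n(L)$: given a subnormal chain $N=N_0\triangleleft N_1\triangleleft\cdots\triangleleft N_r=GL_n(D)$, intersecting each $N_i$ with $GL_n(L)$ produces a subnormal chain from $N$ up to $GL_n(L)$, so $N$ is subnormal in $GL_n(L)$ as well.

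Now $N$ is an infinite, finitely generated, subnormal subgroup of $GL_n(L)$ with $L$ centrally finite. The decisive step is to invoke the $GL_n$-version of [\cite{mah1}, Th.~1] (the statement that Theorem~\ref{thm:4.1} uses in degree one) to conclude that $N$ is contained in the center $Z(L)^* I_n$ of $GL_n(L)$; in particular $N$ is abelian. Finally, since $N$ is an abelian subnormal subgroup of $GL_n(D)$, the matrix-group analogue of [\cite{scott}, 14.4.4, p.~440], used exactly as in the proof of Theorem~\ref{thm:4.1}, places $N$ inside the center $F^* I_n$ of $GL_n(D)$, as desired.

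The main obstacle lies in the middle step: one needs a centrally-finite $GL_n$ statement asserting that an infinite, finitely generated, subnormal subgroup of $GL_n(L)$ is contained in $Z(L)^* I_n$. If [\cite{mah1}] does not treat $n\geq 2$ directly, a natural fallback is to quote a theorem of Wehrfritz on finitely generated subnormal subgroups of skew linear groups of finite degree, or to argue via the faithful embedding $GL_n(L)\hookrightarrow GL_{nd}(\overline{K})$ (where $K=Z(L)$, $[L:K]=d^2$ and $\overline{K}$ is an algebraic closure of $K$) together with the classical fact that finitely generated subnormal subgroups of $GL_m(\overline{K})$ of infinite order are central. Everything else -- the descent via Theorem~\ref{thm:1.4}, the intersection trick for subnormality, and the abelian-to-scalar step at the end -- is routine.
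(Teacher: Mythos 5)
Your main line of argument is essentially correct but follows a genuinely different route from the paper's. For $n>1$ the paper argues by contradiction: if $N$ were non-central then, $N$ being infinite and subnormal, [\cite{mah2}, Th.4] gives $SL_n(D)\subseteq N$; hence $N$ is normal in $GL_n(D)$, so normal in $GL_n(L)$ (with $L$ obtained from Theorem~\ref{thm:1.4} exactly as you do), and [\cite{akb3}, Th.5] on finitely generated \emph{normal} subgroups of $GL_n(L)$ makes $N$, and hence $SL_n(D)$, abelian --- a contradiction. You instead keep subnormality throughout: your intersection trick correctly shows $N$ is subnormal in $GL_n(L)$, and you can then quote [\cite{mah1}, Th.1] itself (its statement is for $GL_n$ over a centrally finite division ring, not only the degree-one case used in Theorem~\ref{thm:4.1}), getting $N\subseteq Z(L)^*I$ directly, without first forcing $SL_n(D)$ into $N$. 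This is a more direct descent, at the price of needing one more step at the end to pass from $Z(L)^*I$ to $F^*I$.

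That last step is where your write-up has a genuine gap, and your fallback has another. There is no off-the-shelf ``matrix-group analogue of [\cite{scott}, 14.4.4]'': that result concerns $D^*$ only, and the naive analogue is false in general (the alternating group of order $3$ is an abelian, normal, non-central subgroup of $GL_2(\mathbb{F}_2)$), so any correct version needs the infiniteness of $N$ or a size restriction. The way to finish is precisely the paper's ingredient: if $N\not\subseteq F^*I$, then, since $N$ is infinite and subnormal in $GL_n(D)$, [\cite{mah2}, Th.4] yields $SL_n(D)\subseteq N$, which is impossible because $N$ is abelian while $SL_n(D)$ is non-abelian for $n\geq 2$ (for $n=1$ use [\cite{scott}, 14.4.4] as in Theorem~\ref{thm:4.1}). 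Separately, the fallback via the embedding $GL_n(L)\hookrightarrow GL_{nd}(\overline{K})$ does not work as stated: the image of $GL_n(L)$ is not normal in $GL_{nd}(\overline{K})$, so subnormality of $N$ in $GL_n(L)$ transfers no subnormality to $GL_{nd}(\overline{K})$, and the classical fact about finitely generated subnormal subgroups of $GL_m(\overline{K})$ cannot be invoked. With the ending repaired as above (and the fallback discarded), your proof is complete.
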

\begin{proof}
If $n=1$, then the result follows from Theorem \ref{thm:4.1}. 

Suppose that $n>1$ and $N$ is non-central. Then, by  [\cite{mah2}, Th.4], $SL_n(D)\subseteq N$. So, $N$ is normal in  $GL_n(D)$. Suppose that $N$ is generated by matrices $A_1, A_2, ..., A_k$ in $GL_n(D)$ and $T$ is the set of all coefficients of all $A_j$. By Theorem  \ref{thm:1.4}, there exists some centrally finite division subring $L$ of $D$ containing $T$. It follows that $N$ is a normal finitely generated subgroup of $GL_n(L)$. By [\cite{akb3}, Th.5], $N\subseteq Z(GL_n(L))$.
In particular,  $N$ is abelian and consequently, $SL_n(D)$ is abelian, that is a contradiction. 
\end{proof}

\begin{lem}\label{lem:4.6}
Let $D$ be a division ring with center $F$ and $n\geq 1$. Then, $Z(SL_n(D))$ is a torsion group if and only if $Z(D')$ is  a torsion group.
\end{lem}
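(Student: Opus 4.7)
The plan is to reduce the equivalence to the explicit description
\[
Z(SL_n(D)) = \{\lambda I : \lambda \in F,\ \lambda^n \in D'\}.
\]
Once this is in hand, both implications follow by a one-line computation, since $Z(D') = F \cap D'$ by [\cite{hdb}, Proposition 2.1], the same fact already used in Theorem \ref{thm:3.2}.

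First I would dispose of the case $n = 1$, where $SL_1(D) = D'$ under the Dieudonn\'e-determinant convention, so that $Z(SL_1(D)) = Z(D')$ and the statement is tautological. From now on assume $n \geq 2$.

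Next I would establish the characterization above. The inclusion $Z(GL_n(D)) \cap SL_n(D) \subseteq Z(SL_n(D))$ is trivial, and $Z(GL_n(D)) = F^*I$. For the reverse inclusion, a standard commutator calculation with the elementary transvections $I + d E_{ij}$ (for $i \neq j$ and $d \in D$), all of which lie in $SL_n(D)$, forces any central element of $SL_n(D)$ to be a diagonal scalar $\lambda I$ with $\lambda$ commuting with every $d \in D$, hence $\lambda \in F$. Membership in $SL_n(D)$ then translates, via the Dieudonn\'e determinant, into the condition $\lambda^n \in D'$.

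With the description secured, the two directions are immediate. For the $(\Leftarrow)$ direction, if $Z(D')$ is torsion and $\lambda I \in Z(SL_n(D))$, then $\lambda^n \in F \cap D' = Z(D')$ is torsion, so some power of $\lambda$ is $1$, and hence $\lambda I$ is torsion. For the $(\Rightarrow)$ direction, take any $x \in Z(D') = F \cap D'$; since $x \in F$ and $x^n \in D'$, the scalar matrix $xI$ lies in $Z(SL_n(D))$, hence is torsion by hypothesis, and therefore so is $x$. The only step that requires any genuine work is the characterization of $Z(SL_n(D))$ for $n \geq 2$; everything else is bookkeeping.
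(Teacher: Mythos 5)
Your proposal is correct and follows essentially the same route as the paper: both reduce the statement to the description $Z(SL_n(D))=\{dI : d\in F^*,\ d^n\in D'\}$ and then finish with the same two-line bookkeeping using $Z(D')=F\cap D'$. The only difference is that the paper simply cites this description of $Z(SL_n(D))$ from Draxl's book, whereas you sketch its standard proof via transvections and the Dieudonn\'e determinant, which is fine but not a genuinely different argument.
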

\begin{proof}
The case $n=1$ is clear. So, we can assume that $n\geq 2$. By [\cite{dra}, \S21, Th.1, p.140], 
$$Z(SL_n(D))=\big\{ dI \vert d\in F^* \text{ and } d^n\in D'\big\}.$$ 
If $Z(SL_n(D))$ is a torsion group, then, for any $d\in Z(D')=D'\cap F$, $dI\in Z(SL_n(D))$. It follows that $d$ is periodic.
Conversely, if $Z(D')$ is a torsion group, then, for any $A\in Z(SL_n(D))$, $A =dI$ for some $d\in F^*$ such that $d^n\in D'$. It follows that $d^n$ is periodic. Therefore, $A$ is periodic.
\end{proof}

Now we can prove the following theorem, which shows that if $D$ is a non-commutative division ring which strongly algebraic over its center, then  there are no finitely generated subgroups of $GL_n(D)$, containing $F^*$. 

\begin{thm}\label{thm:4.7}
 Let $D$ be a non-commutative division ring which is strongly algebraic over its center $F$ and  $N$ be a subgroup of $GL_n(D)$ containing $F^*$, $n\geq 1$.  Then $N$ is not finitely generated.
\end{thm}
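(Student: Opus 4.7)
The plan is to argue by contradiction: assume $N=\langle g_1,\ldots,g_k\rangle$ is finitely generated. The first step is a reduction to the centrally finite case via Theorem~\ref{thm:1.4}. Letting $T$ be the finite set of entries of the $g_i$, Theorem~\ref{thm:1.4} produces a centrally finite division subring $L$ of $D$ containing $F\cup T$, so that $N\subseteq GL_n(L)$; and $F\subseteq Z(L)$ because $F=Z(D)$ commutes with every element of $L$.

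Second, I would note that $F$ must be infinite. If $F$ were finite, then every element of $D$, being algebraic over $F$, would generate a finite subring of $D$ and hence be torsion in $D^*$; by Jacobson's theorem on periodic division rings, $D$ would be commutative, contradicting the hypothesis.

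The third and decisive step uses the reduced norm. Writing $r^2=[L:Z(L)]$, the reduced norm of the central simple $Z(L)$-algebra $M_n(L)$ is a group homomorphism $\mathrm{Nrd}\colon GL_n(L)\to Z(L)^*$ satisfying $\mathrm{Nrd}(\alpha I)=\alpha^{nr}$ for every $\alpha\in Z(L)$. Applied to $N$, the image $\mathrm{Nrd}(N)$ is generated by $\mathrm{Nrd}(g_1),\ldots,\mathrm{Nrd}(g_k)$ and is therefore a finitely generated abelian subgroup of $Z(L)^*$. Since $F^*I\subseteq N$, it follows that $(F^*)^{nr}\subseteq\mathrm{Nrd}(N)$; and because subgroups of finitely generated abelian groups are themselves finitely generated, $(F^*)^{nr}$ must be finitely generated.

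The main obstacle — and the final step — is to contradict this by showing that $(F^*)^{nr}$ is \emph{not} finitely generated for any infinite field $F$. The strategy is to exhibit inside $F^*$ a subgroup whose $nr$-th powers have either infinite free abelian rank or unbounded torsion: if $\mathrm{char}\,F=0$ one uses $\mathbb{Q}^*\subseteq F^*$, whose $nr$-th powers form a free abelian group of infinite rank generated by the $p^{nr}$ over rational primes $p$; if $\mathrm{char}\,F=p$ and $F$ has positive transcendence degree over $\mathbb{F}_p$ one replaces $\mathbb{Q}^*$ by $\mathbb{F}_p(t)^*$; and if $F$ is infinite algebraic over $\mathbb{F}_p$, the cyclotomic subgroups $\mathbb{F}_{p^k}^*\subseteq F^*$ provide elements of arbitrarily large order coprime to $p$, and raising to the $nr$-th power can decrease such orders by at most the factor $nr$, so $(F^*)^{nr}$ has unbounded torsion. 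In every case $(F^*)^{nr}$ contains a subgroup that is not finitely generated, delivering the required contradiction.
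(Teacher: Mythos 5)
Your proof is correct, but it takes a genuinely different route from the paper's. You reduce at once to the centrally finite case: Theorem \ref{thm:1.4} gives a centrally finite division subring $L$ of $D$ containing $F$ and the entries of the generators, and you compose with the reduced norm $\mathrm{Nrd}\colon GL_n(L)\to Z(L)^*$ of the central simple $Z(L)$-algebra $M_n(L)$, so that finite generation of $N$ forces $(F^*)^{nr}$ to be a finitely generated abelian group; you then refute this for every infinite field $F$ by elementary means (primes in $\mathbb{Q}^*$, irreducible polynomials in $\mathbb{F}_p(t)^*$, unbounded torsion when $F$ is infinite algebraic over $\mathbb{F}_p$), having first used Jacobson's theorem to see that non-commutativity of $D$, which is algebraic over $F$, forces $F$ to be infinite. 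The paper's own proof never invokes Theorem \ref{thm:1.4} or the reduced norm: it passes to $N/N'$, notes that $F^*N'/N'$ is finitely generated, and controls $\mathbb{Q}^*I\cap N'$ (resp.\ $K^*I\cap N'$ for $K=\mathbb{F}_p(u)$) via the fact that $Z(SL_n(D))$ is torsion --- which rests on local linearity through Corollary \ref{cor:1.5}, Theorem \ref{thm:3.2} and Lemma \ref{lem:4.6} --- concluding that $\mathbb{Q}^*$ (resp.\ $K^*$) would be finitely generated, and finishing the characteristic-$p$ case with Jacobson's theorem. What your approach buys is independence from Section 3 (no torsion theorem for $Z(D')$, no commutator-subgroup bookkeeping), at the price of the reduced-norm machinery and of the standard fact $GL_n(L)=GL_n(D)\cap M_n(L)$, which is needed so that the generators, and hence all of $N$, lie in $GL_n(L)$ and which you should state explicitly; your treatment of the subcase where $F$ is infinite algebraic over $\mathbb{F}_p$ by unbounded torsion, rather than by Jacobson's theorem as in the paper, is also valid.
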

\begin{proof} Recall  that if $D$ is strongly algebraic over its center, then $Z(D')$ is a torsion group (see Corollary \ref{cor:1.5} and Theorem \ref{thm:3.2}). Therefore, by Lemma \ref{lem:4.6}, $Z(SL_n(D))$ is a torsion group.

Suppose that there is  a finitely generated subgroup $N$ of $GL_n(D)$ containing $F^*$. Then, in virtue of [\cite{scott}, 5.5.8, p. 113], $F^*N'/N'$ is a finitely generated abelian group, where $N'$ denotes the derived subgroup of $N$.\\

\noindent
{\em Case 1: $char(D)=0$.}

 Then, $F$ contains the field $\Q$ of rational numbers and it follows that $\Q^*I/(\Q^*I\cap N')\simeq \Q^*N'/N'$. Since   $F^*N'/N'$ is finitely generated, $\Q^*N'/N'$ is finitely generated and consequently   $\Q^*I/(\Q^*I\cap N')$ is finitely generated. Considering an arbitrary $A\in \Q^*I\cap N'$. Then $A\in F^*I\cap SL_n(D)\subseteq Z(SL_n(D))$. 
Therefore $A$ is periodic.
Since $A\in \Q^*I$, we have $A=dI$ for some $d\in \Q^*$. It follows that $d=\pm{1}$. Thus, $\Q^*I\cap N'$ is finite. Since $\Q^*I/(\Q^*I\cap N')$ is finitely generated, $\Q^*I$ is finitely generated. Therefore $\Q^*$ is finitely generated, that  is impossible.\\

\noindent
{\em Case 2: $char(D)=p > 0$.}

 Denote by $\F_p$ the prime subfield of $F$, we shall prove that $F$ is algebraic over $\F_p$. In fact, suppose that $u\in F$ and $u$  is transcendental over $\F_p$. Put $K:=\F_p(u)$, then the group $K^*I/(K^*I\cap N')$ considered as a subgroup of $F^*N'/N'$ is finitely generated. Considering an arbitrary $A\in K^*I\cap N'$, we have $A=(f(u)/g(u))I$ for some $f(X), g(X)\in \F_p[X], ((f(X), g(X))=1$ and $g(u)\neq 0$. As mentioned above, we have $f(u)^s/g(u)^s=1$ for some positive integer $s$. Since $u$ is transcendental over $\F_p$,  $f(u)/g(u)\in \F_p$. Therefore, $K^*I\cap N'$ is finite and consequently, $K^*I$ is finitely generated. It follows that $K^*$ is finitely generated.
This contradicts to  [\cite{hdb}, Lem. 2.2], stating that $K$ is finite. Hence $F$ is algebraic over $\F_p$ and it follows that $D$ is algebraic over $\F_p$. Now, in virtue of Jacobson's Theorem  [8, (13.11), p. 219],  $D$ is commutative, which completes the proof. 
\end{proof}
 
From   Theorem \ref{thm:4.7} we  get the following result, which  generalizes  Theorem 1 in \cite{akb3}:
\begin{cor}\label{cor:4.8}
Let $D$ be a division ring which strongly algebraic over its center. If the group $GL_n(D)$ is finitely generated, then $D$ is commutative.
\end{cor}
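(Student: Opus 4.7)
The plan is to derive this corollary as an immediate contrapositive of Theorem~\ref{thm:4.7}. First I would note that the full group $GL_n(D)$ trivially contains the scalar subgroup $F^*I$, which has been identified with $F^*$ in the remark preceding Theorem~\ref{thm:4.2}. Taking $N := GL_n(D)$, the hypothesis of Theorem~\ref{thm:4.7} is therefore satisfied whenever $D$ is non-commutative and strongly algebraic over $F$. Consequently, if I assume for contradiction that $D$ is non-commutative, Theorem~\ref{thm:4.7} forces $GL_n(D)$ not to be finitely generated, which contradicts the hypothesis of the corollary. Hence $D$ must be commutative.

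There is no real obstacle in this argument: all the substantive work has already been carried out inside Theorem~\ref{thm:4.7}, which splits on the characteristic of $D$ and invokes Corollary~\ref{cor:1.5}, Theorem~\ref{thm:3.2}, Lemma~\ref{lem:4.6}, and Jacobson's theorem in positive characteristic. The only verification I would have to record explicitly is the trivial observation $F^*I \subseteq GL_n(D)$, which places $GL_n(D)$ itself within the scope of Theorem~\ref{thm:4.7}. Thus the corollary reduces to a one-line deduction, and the ``hard part'' of the result is entirely contained in the preceding theorem rather than in the corollary itself.
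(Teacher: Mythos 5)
Your proposal is correct and coincides with the paper's own (implicit) argument: the corollary is derived immediately from Theorem~\ref{thm:4.7} by taking $N=GL_n(D)$, which contains $F^*I$, so non-commutativity of $D$ would contradict the assumed finite generation. Nothing further is needed.
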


If $M$ is a maximal  finitely generated subgroup of $GL_n(D)$, then $GL_n(D)$ is finitely generated. So, the next result follows  immediately from  Corollary \ref{cor:4.8}.

\begin{cor}\label{cor:4.9}
Let $D$ be a division ring which strongly algebraic over its center. If the group $GL_n(D)$  has a maximal  finitely generated subgroup, then $D$ is commutative.
\end{cor}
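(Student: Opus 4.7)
The plan is to reduce directly to Corollary \ref{cor:4.8} by showing that the existence of a maximal finitely generated subgroup of $GL_n(D)$ already forces $GL_n(D)$ itself to be finitely generated; this is exactly the remark the paper makes just before the statement.

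So first, I would let $M$ be a maximal finitely generated subgroup of $GL_n(D)$, and pick an arbitrary element $g \in GL_n(D)$. The key observation is that the subgroup $H := \langle M \cup \{g\}\rangle$ is still finitely generated, because $M$ is finitely generated and we are only adjoining one further element. Since $M \subseteq H$ and $H$ is finitely generated, the maximality of $M$ in the poset of finitely generated subgroups of $GL_n(D)$ forces $H = M$, and in particular $g \in M$. As $g$ was arbitrary, this yields $M = GL_n(D)$, and hence $GL_n(D)$ is finitely generated.

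Next I would apply Corollary \ref{cor:4.8}, which asserts that a division ring $D$ strongly algebraic over its center with $GL_n(D)$ finitely generated must be commutative. This immediately gives the conclusion.

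There is essentially no real obstacle here; the only thing to watch is the interpretation of \emph{maximal finitely generated subgroup}, which we take in the standard sense of a maximal element in the set of finitely generated subgroups of $GL_n(D)$ ordered by inclusion. Under that reading the two-line argument above suffices, and all the substantive work has already been done in Corollary \ref{cor:4.8}.
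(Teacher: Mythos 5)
Your proposal is correct and follows exactly the paper's route: the paper also reduces to Corollary \ref{cor:4.8} via the observation that a maximal finitely generated subgroup forces $GL_n(D)$ itself to be finitely generated, and you merely spell out the one-line justification (adjoining any element keeps finite generation, so maximality gives $M=GL_n(D)$) that the paper leaves implicit.
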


By the same way as in the proof of Theorem \ref{thm:4.7}, we obtain the following corollary.

\begin{cor}\label{cor:4.10}
Let $D$ be a non-commutative division ring which strongly algebraic over its center $F$ and $S$ is a  subgroup of $GL_n(D)$. If $N=F^*S$, then $N/N'$ is not finitely generated.
\end{cor}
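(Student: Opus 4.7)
The plan is to mirror the argument of Theorem \ref{thm:4.7}, replacing its single use of [\cite{scott}, 5.5.8] by the elementary fact that every subgroup of a finitely generated abelian group is finitely generated. Concretely, I would suppose for contradiction that $N/N'$ is finitely generated. Since $F^* \subseteq N$ (by the very definition $N = F^*S$), the group $F^*N'/N'$ is a subgroup of the abelian group $N/N'$, and is therefore itself finitely generated. Exactly as in the opening lines of the proof of Theorem \ref{thm:4.7}, Corollary \ref{cor:1.5} combined with Theorem \ref{thm:3.2} and Lemma \ref{lem:4.6} then yields that $Z(SL_n(D))$ is a torsion group. After this setup, the remainder of the argument splits by the characteristic of $D$ and runs identically to Theorem \ref{thm:4.7}.

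In characteristic zero I would look at the subgroup $\Q^*I/(\Q^*I \cap N')$ of $F^*N'/N'$, which inherits finite generation. Any $A \in \Q^*I \cap N'$ lies in $F^*I \cap SL_n(D) \subseteq Z(SL_n(D))$ and so is torsion; but the only torsion elements of $\Q^*I$ are $\pm I$, so $\Q^*I \cap N'$ is finite. Hence $\Q^*I$, and therefore $\Q^*$, would be finitely generated, which is impossible.

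In positive characteristic $p$ I would run the same argument on $K = \F_p(u)$ for an arbitrary transcendental $u \in F$: finite generation of $K^*I/(K^*I \cap N')$ together with the torsion conclusion forces any $A = (f(u)/g(u))I \in K^*I \cap N'$ to satisfy $(f(u)/g(u))^s = 1$, and by transcendence of $u$ this requires $f(u)/g(u) \in \F_p$, so $K^*I \cap N'$ is finite and $K^*$ is finitely generated, contradicting [\cite{hdb}, Lem.~2.2]. Hence every $u \in F$ is algebraic over $\F_p$, so $D$ is algebraic over $\F_p$, and Jacobson's theorem [\cite{lam}, (13.11)] forces $D$ commutative, against the hypothesis.

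The proof is essentially a re-run of Theorem \ref{thm:4.7}, and there is no genuinely new obstacle to overcome. The only adjustment worth pointing out is at the very beginning: instead of invoking Scott's criterion to extract a finitely generated abelian section from a finitely generated group, the abelian section $N/N'$ is handed to us by hypothesis, and subgroup-closure within a finitely generated abelian group does the rest. Everything downstream — the torsion analysis in $Z(SL_n(D))$ and the two characteristic cases — then proceeds verbatim as before.
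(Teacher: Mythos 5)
Your proposal is correct and follows essentially the same route as the paper: the paper notes $N'=S'$ and uses that $F^*I/(F^*I\cap S')\simeq F^*S'/S'$ is a subgroup of the finitely generated abelian group $N/N'$, then cites the arguments of Theorem \ref{thm:4.7}, which is exactly your observation that the abelian section is handed over by hypothesis rather than by Scott's criterion. No substantive difference.
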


\begin{proof}
Suppose that $N/N'$ is finitely generated. Since $N'=S'$ and $ F^*I/(F^*I\cap S') \simeq F^*S'/S'$, $F^*I/(F^*I\cap S')$ is a finitely generated abelian group.  Now, by the same arguments as in the proof of Theorem \ref{thm:4.7}, we conclude that $D$ is commutative.
\end{proof}

The following result follows immediately from Corollary \ref{cor:4.10}.

\begin{cor}\label{cor:4.11}
Let $D$ be a non-commutative division ring which strongly algebraic over its center. Then, $GL_n(D)/SL_n(D)$  is not finitely generated.
\end{cor}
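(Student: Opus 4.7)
The plan is to derive this statement immediately from Corollary~\ref{cor:4.10} by taking $S = GL_n(D)$. With this choice, $N = F^*S$ coincides with $GL_n(D)$ itself, because $F^*I$ is already contained in $GL_n(D)$, and consequently $N' = [GL_n(D),GL_n(D)]$. Corollary~\ref{cor:4.10} therefore tells us that the abelianisation $GL_n(D)/[GL_n(D),GL_n(D)]$ is not finitely generated, and it only remains to identify this quotient with $GL_n(D)/SL_n(D)$.

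For that identification I would invoke the classical Dieudonn\'{e} determinant theorem (see e.g.\ \cite{dra}, \S20, or \cite{lam}): the Dieudonn\'{e} determinant $\delta\colon GL_n(D)\to D^*/D'$ has abelian image and kernel equal to $SL_n(D)$, which yields $[GL_n(D),GL_n(D)]\subseteq SL_n(D)$; conversely, for $n\geq 2$ the group $SL_n(D)$ is generated by transvections, each of which is a commutator in $GL_n(D)$, and for $n = 1$ the equality $SL_1(D) = D' = [D^*,D^*]$ holds by definition. Hence $[GL_n(D),GL_n(D)] = SL_n(D)$ for every $n\geq 1$, and the conclusion of Corollary~\ref{cor:4.10} becomes exactly the statement to be proved.

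I do not anticipate any genuine obstacle, since the whole substance is already packaged in Corollary~\ref{cor:4.10}; the only extra input is the standard Dieudonn\'{e} identification. If one prefers to avoid even the commutator half of that result, an alternative route is to apply Corollary~\ref{cor:4.10} in the case $n = 1$ with $S = D^*$ (so that $N = D^*$, $N' = D'$), obtaining that $D^*/D'$ is not finitely generated, and then transfer this to general $n$ via the Dieudonn\'{e} determinant isomorphism $GL_n(D)/SL_n(D) \cong D^*/D'$, which requires only the determinant itself and not the commutator computation.
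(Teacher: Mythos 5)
Your proposal is correct and matches the paper's intent: the paper derives Corollary \ref{cor:4.11} immediately from Corollary \ref{cor:4.10} with $S=GL_n(D)$ (so $N=F^*S=GL_n(D)$), using the standard Dieudonn\'e facts that $[GL_n(D),GL_n(D)]=SL_n(D)$ for non-commutative $D$, equivalently $GL_n(D)/SL_n(D)\cong D^*/D'$. Your care in justifying the equality of $SL_n(D)$ with the commutator subgroup (or, alternatively, routing through the case $n=1$ and the determinant isomorphism) fills in exactly the step the paper leaves implicit.
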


\end{document}